    \DeclareRobustCommand{\SkipTocEntry}[5]{}
\let\origsubsection\subsection
\renewcommand{\subsection}{
  \@ifstar{\subsection@star}{\origsubsection}%
}
\newcommand{\subsection@star}[1]{
  \addtocontents{toc}{\protect\SkipTocEntry}
  \origsubsection*{#1}
}
\let\origsection\section
\renewcommand{\section}{
  \@ifstar{\section@star}{\origsection}%
}
\newcommand{\section@star}[1]{
  \addtocontents{toc}{\protect\SkipTocEntry}
  \origsection*{#1}
}
\newcommand{\addresseshere}{
  \enddoc@text\let\enddoc@text\relax
}
\renewcommand*\env@matrix[1][\arraystretch]{%
  \edef\arraystretch{#1}%
  \hskip -\arraycolsep
  \let\@ifnextchar\new@ifnextchar
  \array{*\c@MaxMatrixCols c}}
\title{Optimal Hypercontractivity and Log--Sobolev inequalities on Cyclic Groups $\mathbb{Z}_{m\cdot 2^k}$}
\date{\today}
\author{Gan Yao}
\address[Gan Yao]{Institute for Advanced Study in Mathematics, Harbin Institute of Technology,  Harbin 150001, China.}
\email{gan.yao3@outlook.com}
\begin{document}

\begin{abstract}
For $1<p\le q<\infty$ and $n\in\{3\cdot 2^{k},2^{k}\}$ with $k\ge 1$, we prove that the Poisson-like semigroup $(P_t)_{t\in \mathbb{R}_+}$ on $\mathbb{Z}_n$, associated with the word length $\psi_n(k)=\min(k,n-k)$, is hypercontractive from $L_p$ to $L_q$ if and only if $t\ge \tfrac{1}{2}\log\big(\tfrac{q-1}{p-1}\big)$. We establish sharp Log--Sobolev inequalities with the optimal constant $2$, by performing a KKT analysis, and lifting from the base cases $\mathbb{Z}_6$ and $\mathbb{Z}_4$ via a Cooley--Tukey $n\mapsto 2n$ comparison of Dirichlet forms. The general case for arbitrary $n$ remains open.
\end{abstract}

\maketitle
\tableofcontents
\section{Introduction}
The hypercontractivity of the Poisson-like semigroup on the cyclic group $\mathbb{Z}_n$ is a long-standing open problem for all $n$ except $n=2$ and $n=4$. More precisely, by equipping $\mathbb{Z}_n$ with the normalized counting measure $\mu_n$, the Poisson-like semigroup $(P_t)_{t\in \mathbb{R}_+}$ is defined as the family of maps $P_t:L_\infty(\mathbb{Z}_n,\mu_n)\to L_\infty(\mathbb{Z}_n,\mu_n)$, which acts on Fourier series by
\[
  P_t:\sum_{k=0}^{n-1} a_k\chi_k(x) \mapsto \sum_{k=0}^{n-1} e^{-t\psi_n(k)}a_k\chi_k(x),
\]
where $\psi_n(k)=\min(k,n-k)$ is the word-length function on $\mathbb{Z}_n$, and $\chi_k(x)=e^{\frac{2\pi ikx}{n}}\in L_\infty(\mathbb{Z}_n)$. The hypercontractivity problem asks for the optimal time $t_{p,q}$ with $1< p\leq q<\infty$ such that
\[
    \norm{P_t f}_{q}\le \norm{f}_{p}\qquad \text{for all } t\ge t_{p,q}.
\]
For the case $n=2$, the optimal time $t_{p,q}=\tfrac{1}{2}\log\big(\tfrac{q-1}{p-1}\big)$ follows by applying the classical two-point inequality. That inequality was first proved by Bonami~\cite{MR283496}, later rediscovered by Gross~\cite{MR420249}, and was also used by Beckner~\cite{MR385456} to obtain the best constants for the Hausdorff--Young inequality. For $n=4$, Beckner, Janson, and Jerison~\cite{MR730056} obtained the same optimal time $t_{p,q}=\tfrac{1}{2}\log\big(\tfrac{q-1}{p-1}\big)$ by a clever reduction from $\mathbb{Z}_2$ to $\mathbb{Z}_4$, though the approach does not extend to other $\mathbb{Z}_{m\times n}$. For $n=3$, by Wolff's reduction \cite[Corollary 3.1]{MR2314078}, the optimal time $t_{2,q}$ of $P_t$ on $\mathbb{Z}_3$ coincides the optimal time 
\[
t_{2,q}=\frac{1}{2}\log(\frac{\frac{2}{3}(\frac{1}{3})^{\frac{2}{q}-1}-\frac{1}{3}(\frac{2}{3})^{\frac{2}{q}-1}}{(\frac{2}{3})^{\frac{2}{q}}-(\frac{1}{3})^{\frac{2}{q}}})
\]
of the simple semigroup $T_t=e^{-t(\mathrm{id}-\mathbb{E}_{\delta})}$ on the weighted two-point space $L_\infty(\{-1,1\},\tfrac{1}{3}\delta_{-1}+\tfrac{2}{3}\delta_{1})$, which was computed by Oleszkiewicz and Lata\l a \cite{MR1796718,MR2073432}. For the general optimal time $t_{p,q}$ of $(P_t)_{t\in \mathbb{R}_+}$ on $\mathbb{Z}_3$, only asymptotic information is available; see \cite[Theorem 2.1]{MR2314078}. For $n=5$, Andersson~\cite{MR1883499} determined $t_{2,q}=\tfrac{1}{2}\log(q-1)$ for $q\in 2\mathbb{Z}_+$. For $n\ge 6$, Junge, Palazuelos, Parcet, and Perrin~\cite{MR3709719} proved partial results: for even $n$, $t_{2,q}=\tfrac{1}{2}\log(q-1)$ for $q\in 2\mathbb{Z}_+$; for odd $n$, the same holds when $n\ge q$, via a rather involved combinatorial method. Combining Stein's interpolation method \cite{MR82586} with Gross's extrapolation technique \cite{MR420249,MR372613,MR2325763}, they further obtained $t_{p,q}\le \tfrac{\log 3}{2}\log\big(\frac{q-1}{p-1}\big)$ in those regimes.\par

Hypercontractivity is also widely studied for semigroups on manifolds. Among the most classical examples, Weissler~\cite{MR578933} proved optimal hypercontractivity for the heat and Poisson semigroups on the circle $\mathbb{S}^1$, while Rothaus~\cite{MR593787} independently treated the heat case. For the sphere $\mathbb{S}^n$ ($n\ge 2$), Mueller and Weissler~\cite{MR674060} established optimal hypercontractivity for the heat semigroup. For Poisson-type semigroups on $\mathbb{S}^n$, see Janson~\cite{MR706641}, Beckner~\cite{MR1164616}, and Frank--Ivanisvili~\cite{MR4278128}.\par 

Now we present our main result for the Poisson-like semigroup $(P_t)_{t\in\mathbb{R}_+}$ on $\mathbb{Z}_n$. Note that the standard argument, based on a simple series expansion of $\|P_t(1+\epsilon f)\|_q$ and $\|1+\epsilon f\|_p$ at $\epsilon=0$, shows the universal lower bound $t_{p,q}\ge \tfrac{1}{2}\log\big(\tfrac{q-1}{p-1}\big)$. The theorem below records the optimal times $t_{p,q}$ along a dyadic tower of $n$.
\begin{theorem}\label{Thm: Hypercontractivity on tower of n}
  For $n=3\cdot 2^k$ and $n=2^k$ with $k\ge 1$, we have 
  \[
    \norm{P_{t}f}_q\le \norm{f}_p\quad \Leftrightarrow \quad t\ge \frac{1}{2}\log(\frac{q-1}{p-1})
    \]
    for $1<p\le q<\infty$.
\end{theorem}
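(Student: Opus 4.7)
The plan is to convert hypercontractivity into a sharp logarithmic Sobolev inequality (LSI) via Gross's theorem, prove the LSI in the base cases $\mathbb{Z}_4,\mathbb{Z}_6$ by a KKT analysis, and lift it along the doubling tower $n\mapsto 2n$ by a Cooley--Tukey comparison of Dirichlet forms. By Gross's equivalence, the inequality $\|P_tf\|_q\le\|f\|_p$ at $t=\tfrac12\log\tfrac{q-1}{p-1}$ for every $1<p\le q<\infty$ is equivalent to the sharp LSI
\[
\mathrm{Ent}_{\mu_n}(|f|^2)\le 2\,\mathcal{E}_n(f,f),\qquad \mathcal{E}_n(f,f)=\sum_{k=0}^{n-1}\psi_n(k)|\hat f(k)|^2,
\]
with constant $2$. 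Since the universal lower bound $t_{p,q}\ge\tfrac12\log\tfrac{q-1}{p-1}$ is already recorded, the theorem reduces to establishing this LSI on $\mathbb{Z}_n$ for $n\in\{2^k,3\cdot 2^k\}$, $k\ge 1$.

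\textbf{Base cases by KKT.} For $n\in\{4,6\}$ I would minimize the deficit $\mathcal{D}(f)=2\mathcal{E}_n(f,f)-\mathrm{Ent}_{\mu_n}(|f|^2)$ over non-negative $f$ normalized by $\mathbb{E}f^2=1$, cutting variables using the dihedral symmetries of $\psi_n$. The KKT stationarity equations yield a finite polynomial system whose real solutions can be enumerated, and direct verification at each critical point gives $\mathcal{D}\ge 0$ with equality only at constants and at the first Fourier modes, matching the spectral gap. For $n=4$ this re-derives Beckner--Janson--Jerison in a uniform framework; the genuinely new input is the $\mathbb{Z}_6$ analysis.

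\textbf{Cooley--Tukey lift.} For $f:\mathbb{Z}_{2n}\to\mathbb{C}$, set $f_e(j)=f(2j)$ and $f_o(j)=f(2j+1)$ on $\mathbb{Z}_n$. The DFT butterfly $\hat f(k)=\tfrac12(\hat f_e(k)+e^{-\pi ik/n}\hat f_o(k))$, $\hat f(k+n)=\tfrac12(\hat f_e(k)-e^{-\pi ik/n}\hat f_o(k))$ for $k=0,\dots,n-1$, together with the arithmetic identity $\psi_{2n}(k)+\psi_{2n}(k+n)=n$, rewrites $\mathcal{E}_{2n}(f,f)$ as a diagonal quadratic in $(\hat f_e,\hat f_o)$ plus a parity cross term with $k$-dependent weights. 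Simultaneously, conditioning on parity splits $\mathrm{Ent}_{\mu_{2n}}(|f|^2)$ into the conditional entropies $\mathrm{Ent}_{\mu_n}(|f_e|^2),\mathrm{Ent}_{\mu_n}(|f_o|^2)$ plus a two-point parity entropy. Applying the inductive LSI on $\mathbb{Z}_n$ to $f_e$ and $f_o$ then reduces the whole problem to a sharp comparison between the parity entropy and the Cooley--Tukey cross term, with the constant $2$ preserved.

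\textbf{Main obstacle.} The delicate piece is precisely this final two-point comparison: the cross-term weights vanish at the middle frequency $k=n/2$, so it cannot directly dominate the parity entropy without mining additional slack from the inductive LSI applied to $f_e,f_o$. Balancing these contributions while preserving the sharp constant $2$ requires a detailed bookkeeping that depends on the special arithmetic of the doubling $n\mapsto 2n$ and on the extremal structure of the base-case LSI; I expect this is exactly what confines the method to the towers $\{2^k\}$ and $\{3\cdot 2^k\}$, with $\mathbb{Z}_4$ and $\mathbb{Z}_6$ as irreducible seeds, and why the general $n$ must remain open.
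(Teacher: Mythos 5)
Your overall architecture coincides with the paper's: Gross's equivalence reduces the theorem to the LSI with constant $2$, the base cases $n=4,6$ are handled by a KKT analysis, and the tower is climbed by a Cooley--Tukey splitting of the $2n$-point DFT combined with the parity decomposition of the entropy and the two-point LSI. You have also correctly located the crux: the cross-term/middle-frequency comparison in the doubling step. But at exactly that point your proposal has a genuine gap rather than a proof. As the paper's final remark records, the na\"ive pairs $(\psi_4,\psi_8)$ and $(\psi_6,\psi_{12})$ do \emph{not} satisfy the quadratic compatibility condition needed to dominate the parity entropy by the residual Dirichlet form, so the induction as you describe it does not close. The paper's resolution is not ``bookkeeping'' inside the inductive step but a strengthening of the \emph{base} inequalities: one proves the $4$-- and $6$--LSI for strictly smaller weights $\phi_4(2)=\tfrac{8}{5}<2$ and $\phi_6(3)=1<3$, and then propagates modified weights $\gamma_n(n/2)=\tfrac{n}{2}-1$ up the tower; the slack created at the middle frequency is precisely what makes the scalar inequality in the comparison of Dirichlet forms (Proposition~\ref{Prop: Compare Dirichlet form for n and 2n}) hold. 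Since $\phi_n\le\gamma_n\le\psi_n$ pointwise, the strengthened LSIs imply the ones for the word length. Your proposal neither introduces these weights nor offers an alternative mechanism, so the lift from $4\to 8$ and $6\to 12$ is unproven as written.

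A secondary inaccuracy: the KKT stationarity system is not ``a finite polynomial system whose real solutions can be enumerated.'' Because the entropy contributes terms of the form $\lambda_j\log\lambda_j$, the system is transcendental; the paper rules out bad critical points by monotonicity and symmetry arguments (culminating in showing a one-variable transcendental function $h$ is positive via an eight-step derivative cascade), not by root enumeration. Moreover, equality in the sharp LSI holds only at constants; the first Fourier modes are not equality cases of the LSI itself (they only saturate the linearized/spectral-gap inequality), so your description of the extremal set is also off.
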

A standard route to hypercontractivity proceeds through Log--Sobolev inequalities (LSI) by Gross's celebrated work~\cite{MR420249}: $\norm{P_{t}f}_q\le \norm{f}_p$ holds whenever $t\geq \frac{C}{4}\log\big(\frac{q-1}{p-1}\big)$ if and only if the corresponding LSI holds with constant $C$. Thus, to prove Theorem~\ref{Thm: Hypercontractivity on tower of n}, it suffices to establish the following $n$--LSI with the optimal constant $2$ along the above dyadic tower of $n$. Denote by $A_{\psi_n}$ the generator of semigroup $(P_t)_{t\in\mathbb{R}_+}$, that is
\begin{equation}
  A_{\psi_n}: \sum_{k=0}^{n-1} a_k\chi_k(x)\mapsto \sum_{k=0}^{n-1} \psi_n(k)a_k\chi_k(x).
\end{equation}
 The case $n=4$ in the following theorem is due to the work \cite{MR730056} and Gross's extrapolation technique~\cite{MR420249}.
\begin{theorem}\label{Thm: Log Sobolev inequality n=6 times 2^k and n=8 times 2^k}
For $n=3\cdot 2^k$ and $n=2^k$ with $k\ge 1$, we have the following LSI with the optimal constant $2$:
\[
  \int_{\mathbb{Z}_n} f^2\log f^2\dd\mu_n-\norm{f}_2^2\log\norm{f}_2^2\le 2\inner{f,A_{\psi_n}f}_{L_2(\mathbb{Z}_n,\mu_n)},\qquad f\in L_2^+(\mathbb{Z}_n,\mu_n).
\]
\end{theorem}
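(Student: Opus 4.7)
The strategy is the two-stage bootstrap announced in the abstract: establish the LSI with constant $2$ directly on the base cases $\mathbb{Z}_4$ and $\mathbb{Z}_6$ via a KKT analysis of the deficit functional, then lift the inequality from $\mathbb{Z}_n$ to $\mathbb{Z}_{2n}$ by combining a Cooley--Tukey splitting with a comparison of Dirichlet forms. Iterating along the two towers $4\to 8\to 16\to\cdots$ and $6\to 12\to 24\to\cdots$ covers every $n$ in the statement.

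\textbf{Base cases.} By homogeneity we may restrict to $\norm{f}_2=1$ and prove non-negativity of the deficit
\[
  D_n(f):=2\inner{f,A_{\psi_n}f}_{L_2(\mathbb{Z}_n,\mu_n)}-\int_{\mathbb{Z}_n} f^2\log f^2\,\dd\mu_n
\]
on $L_2^+(\mathbb{Z}_n,\mu_n)$. Lagrange multipliers give the KKT stationarity $2A_{\psi_n}f=f(\log f^2+\lambda)$. For $n\in\{4,6\}$ the dihedral symmetry of $\mathbb{Z}_n$ acts on the critical set, reducing the search to a small number of explicit one- or two-parameter families which can be enumerated by hand. The case $n=4$ is already known from Beckner--Janson--Jerison combined with Gross's extrapolation; the essential work is at $n=6$, where we classify the critical configurations of the KKT system and verify $D_6\ge 0$ on each, with equality only at the constants.

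\textbf{Induction $n\mapsto 2n$.} Given $f:\mathbb{Z}_{2n}\to\mathbb{R}_+$, introduce the even/odd components $f_i\in L_2^+(\mathbb{Z}_n,\mu_n)$ by $f_i(j)=f(2j+i)$ for $i\in\{0,1\}$, together with the parity-marginal $\phi\in L_2^+(\mathbb{Z}_2,\mu_2)$ defined by $\phi^2(i)=\norm{f_i}_2^2$. Conditioning on parity gives the exact entropy decomposition
\[
  \operatorname{Ent}_{\mu_{2n}}(f^2)=\tfrac{1}{2}\operatorname{Ent}_{\mu_n}(f_0^2)+\tfrac{1}{2}\operatorname{Ent}_{\mu_n}(f_1^2)+\operatorname{Ent}_{\mu_2}(\phi^2).
\]
Applying the inductive LSI on $\mathbb{Z}_n$ to $f_0,f_1$ and the two-point LSI (Bonami--Gross) to $\phi$ bounds this by a sum of Dirichlet forms; the scheme then closes provided one establishes the Cooley--Tukey Dirichlet-form comparison
\[
  \inner{f,A_{\psi_{2n}}f}\ge \tfrac{1}{2}\inner{f_0,A_{\psi_n}f_0}+\tfrac{1}{2}\inner{f_1,A_{\psi_n}f_1}+\inner{\phi,A_{\psi_2}\phi},
\]
possibly with additional non-negative slack. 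Chaining the two steps delivers the LSI on $\mathbb{Z}_{2n}$ with the same sharp constant $2$.

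\textbf{Expected main obstacle.} The tight Cooley--Tukey Dirichlet-form comparison in the induction step is the delicate piece. Even/odd splitting gives $\psi_{2n}(2j)=2\psi_n(j)$, but $\psi_{2n}(2j+1)$ mixes parity with the underlying $\mathbb{Z}_n$-mode and does not factor cleanly; the comparison must therefore be set up either on the group side, via an edge accounting on the Cayley graph, or on the Fourier side, by matching the quadratic form in $\hat f(k)$ against those in $\hat{f_0},\hat{f_1},\phi$. Making this inequality tight enough to preserve the constant $2$ at every rung is exactly what restricts the argument to the two dyadic towers of the theorem---any looseness would compound geometrically and destroy optimality. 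A secondary difficulty is solving the KKT system at $n=6$ in closed form: the coupled polynomial--logarithmic equations $2A_{\psi_6}f=f(\log f^2+\lambda)$ require careful symmetric reduction to fully classify the critical points.
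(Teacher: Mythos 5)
Your two-stage architecture (KKT at the base, Cooley--Tukey even/odd decimation plus the two-point inequality for the induction, with exactly the entropy decomposition and the Dirichlet-form comparison you display) is the same as the paper's. But there is a genuine gap at the hinge of the argument: the induction does \emph{not} close if both sides of the comparison carry the word-length $\psi$. The paper reduces the Dirichlet-form comparison to a scalar inequality in $(x,r_a,r_b)$ whose key coefficient is $2\gamma_{2n}(\tfrac n2)-2\gamma_n(\tfrac n2)-1$; for $\gamma_n=\psi_n$, $\gamma_{2n}=\psi_{2n}$ one has $\psi_{2n}(\tfrac n2)=\psi_n(\tfrac n2)=\tfrac n2$, so this coefficient is $-1$ and the inequality fails (the paper records explicitly that $(\psi_4,\psi_8)$ and $(\psi_6,\psi_{12})$ do not satisfy the required condition). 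The fix, which your proposal is missing, is to \emph{strengthen} the base cases: one proves the LSI on $\mathbb{Z}_4$ and $\mathbb{Z}_6$ for modified weights $\phi_4$ (equal to $\psi_4$ except $\phi_4(2)=\tfrac 85$) and $\phi_6$ (equal to $\psi_6$ except $\phi_6(3)=1$), and then runs the induction with auxiliary weights $\gamma_n$ equal to $\psi_n$ except $\gamma_n(\tfrac n2)=\tfrac n2-1$, for which the scalar condition does hold; since $\gamma_n\le\psi_n$ pointwise, the $\psi_n$--LSI follows a posteriori. Your plan to take the $n=4$ case off the shelf from Beckner--Janson--Jerison and to prove the $\psi_6$--LSI directly therefore produces base cases that are too weak to feed the induction.

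A secondary, related point: lowering the top-frequency weight is not only needed for the induction, it is also what makes the KKT system solvable. With $\phi_6$ the stationarity equations acquire a symmetry pairing $\lambda_j$ with $\lambda_{j+3}$ (the function $F(\lambda_j)-\nu_j$ is invariant under $j\mapsto j+3$), which is what allows the critical set to be classified; the final nondegenerate branch still requires reducing to a one-variable transcendental equation $h(r)=0$ and an eight-fold differentiation argument to show $h>0$ for $r>1$. So ``enumerate the critical families by hand using dihedral symmetry'' understates what is required, and doing it for $\psi_6$ rather than $\phi_6$ would lose the symmetry that makes the computation tractable.
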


Our proof of Theorem~\ref{Thm: Log Sobolev inequality n=6 times 2^k and n=8 times 2^k} is based on a new induction scheme with three key ingredients:
\begin{enumerate}
  \item Auxiliary weights $\phi_4$ on $\mathbb{Z}_4$ and $\phi_6$ on $\mathbb{Z}_6$ together with their corresponding LSIs. These LSIs are tighter than those for the word-lengths $\psi_n$; this refinement is new even when $n=4$ and plays a key role in the analysis of LSIs.
  \item Karush--Kuhn--Tucker (KKT) analysis for the $4$-- and $6$--LSI.  
We develop an efficient way to handle LSIs on $\mathbb{Z}_n$ via KKT analysis, combined with the aforementioned manipulation of the length functions. The specific structure of $\phi_6$ introduces a symmetry in the KKT system that makes the analysis tractable.
  \item An induction from the $n$--LSI to the $2n$--LSI under a crucial compatibility condition. We use a Cooley--Tukey factorization of the $2n$-point discrete Fourier transform (DFT), which expresses a large DFT as a combination of smaller DFTs and yields a comparison of Dirichlet forms at the scales $n$ and $2n$. The choice of the new weights $\phi_4$ and $\phi_6$ mentioned in (1) is crucial for the base steps $4\to 8$ and $6\to 12$.
\end{enumerate}
Finally, we remark that the above ideas are also useful for studying LSIs along other towers of the form $m\cdot n^k$. The KKT analysis and the compatibility condition vary in their technical details, depending on the specific value of $n$ and on the particular choice of weights on $\mathbb{Z}_n$, and we will not carry out this analysis in this paper.\par

The article is organized as follows. After a brief introduction to the LSI formulation and the $n$-dimensional KKT framework in Section~\ref{Sec: DFT and KKT analysis}, we analyze the KKT systems associated with the LSIs for $n=6$ and $n=4$ in Section~\ref{Sec: Log Sobolev inequality n=4 and n=6}. Section~\ref{Sec: Induction from n to 2n} presents a comparison criterion for a pair of weights that allows us to compare the Dirichlet forms at the scales $n$ and $2n$. We then apply it to the transition $n\to 2n$ of LSIs and establish Theorem~\ref{Thm: Log Sobolev inequality n=6 times 2^k and n=8 times 2^k}.

\section{Fourier formulation of LSI on $\mathbb{Z}_n$ and KKT framework}\label{Sec: DFT and KKT analysis}

Let $F_n$ denote the $n\times n$ discrete Fourier transform (DFT) matrix
\[
    F_n=\begin{pmatrix}
        1 & 1 & 1 & \cdots & 1\\
        1 & \omega & \omega^2 & \cdots & \omega^{n-1}\\
        1 & \omega^2 & \omega^4 & \cdots & \omega^{2(n-1)}\\
        \vdots & \vdots & \vdots & \ddots & \vdots\\
        1 & \omega^{n-1} & \omega^{2(n-1)} & \cdots & \omega^{(n-1)(n-1)}
    \end{pmatrix},
\]
where $\omega=e^{2\pi i/n}$ and, with $0$-indexed rows/columns, $(F_n)_{j,k}=\omega^{jk}$ for $0\le j,k\le n-1$. It is well known that $\frac{1}{\sqrt{n}}F_n$ is unitary (see, e.g., \cite[Section 2.5]{MR543191}). For a column vector $x\in\mathbb{C}^n$, we write its DFT as
\[
      \hat x=(\hat{x}_0,\ldots,\hat{x}_{n-1})^{\mathrm{T}}=F_nx,\quad \text{where}\quad \hat{x}_k=\sum_{j=0}^{n-1}x_j\omega^{jk}.
\]\par 

Given $\lambda\in\mathbb{R}_+^n$, define the entropy functional on the vector $\lambda$ by
\[
  \mathrm{H}_n[\lambda] \coloneqq\frac{1}{n}\sum_{k=0}^{n-1}\lambda_k^2\log(\lambda_k^2)-\frac{\norm{\lambda}_2^2}{n}\log(\frac{\norm{\lambda}_2^2}{n})=\frac{1}{n}\sum_{k=0}^{n-1}\lambda_k^2\log(\frac{n\lambda_k^2}{\norm{\lambda}_2^2}).
\]
 The functional $\mathrm{H}_n[\lambda]$ is homogeneous of degree $2$, i.e., $\mathrm{H}_n[\alpha\lambda]=\alpha^2\mathrm{H}_n[\lambda]$ for $\alpha>0$.\par

The unitary $\frac{1}{\sqrt{n}}F_n$ yields the following equivalence between the $n$--LSI and the $n$-variable entropy--Dirichlet form inequality used in the paper; throughout, “LSI” refers to either formulation.

\begin{lemma}\label{Lem: Explicit form of Log Sobolev ineq}
Let $\gamma:\mathbb{Z}_n\to\mathbb{R}_+$ and write
\[
  \diag(\gamma)\coloneqq \diag\big(\gamma(0),\gamma(1),\ldots,\gamma(n-1)\big)\in M_n(\mathbb{R}).
\]
Define $A_{\gamma}:L_\infty(\mathbb{Z}_n,\mu_n)\to L_\infty(\mathbb{Z}_n,\mu_n)$ by
\[
  A_{\gamma}: \sum_{k=0}^{n-1}a_k\chi_k(x)\mapsto\sum_{k=0}^{n-1}\gamma(k)a_k\chi_k(x).
\]
Then
\[
  \int_{\mathbb{Z}_n} f^2\log f^2\dd \mu_n-\norm{f}_2^2\log\norm{f}_2^2 \le 2\inner{f,A_{\gamma}f}_{L_2(\mathbb{Z}_n,\mu_n)},\qquad f\in L_2^+(\mathbb{Z}_n,\mu_n),
\]
is equivalent to
\[
  \mathrm{H}_n[\lambda] \le 2\inner{\lambda,\Gamma\lambda},\qquad \lambda\in\mathbb{R}_+^n,
\]
where $\inner{\cdot,\cdot}$ denotes the Hermitian inner product on $\ell_2^n$ and $\Gamma=\frac{1}{n}F_n\diag(\gamma)F_n^{-1}\in M_n(\mathbb{C})$. In particular, if $\gamma(k)=\gamma(n-k)$ for $1\le k\le n-1$, then $\Gamma$ is real symmetric.
\end{lemma}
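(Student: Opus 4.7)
\textbf{Proof plan for Lemma \ref{Lem: Explicit form of Log Sobolev ineq}.} The plan is to identify a nonnegative function $f$ with the column vector $\lambda=(f(0),\dots,f(n-1))^{\mathrm T}\in\mathbb{R}_+^n$ of its pointwise values and then rewrite both sides of the LSI via the DFT. This is essentially a bookkeeping exercise about the normalization of $F_n$; there is no real obstacle beyond getting the factors of $n$ correct.

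First I will handle the entropy side. Under the normalized counting measure $\mu_n$, the definitions give
\[
  \int_{\mathbb{Z}_n} f^2\log f^2\,\dd\mu_n=\frac{1}{n}\sum_{j=0}^{n-1}\lambda_j^2\log\lambda_j^2,\qquad \norm{f}_2^2=\frac{\norm{\lambda}_2^2}{n}.
\]
Substituting these into the LHS of the LSI produces exactly $\mathrm{H}_n[\lambda]$ as defined above, so the LHS is handled by pure algebra.

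For the Dirichlet form side, the Fourier coefficients of $f$ are $a_k=\int f\bar\chi_k\,\dd\mu_n=\frac{1}{n}\sum_{j}\lambda_j\omega^{-jk}$, so the vector $a=(a_0,\dots,a_{n-1})^{\mathrm T}$ equals $F_n^{-1}\lambda$ (using that $F_n^{-1}=\tfrac1n\overline{F_n}$ since $\tfrac{1}{\sqrt n}F_n$ is unitary and $F_n$ is symmetric). Plancherel then gives
\[
  \inner{f,A_{\gamma}f}_{L_2(\mathbb{Z}_n,\mu_n)}=\sum_{k=0}^{n-1}\gamma(k)|a_k|^2=\inner{F_n^{-1}\lambda,\diag(\gamma)F_n^{-1}\lambda}=\inner{\lambda,(F_n^{-1})^{*}\diag(\gamma)F_n^{-1}\lambda}.
\]
Since $F_n$ is symmetric, $(F_n^{-1})^{*}=\overline{F_n^{-1}}^{\mathrm T}=\tfrac{1}{n}F_n$, and therefore $(F_n^{-1})^{*}\diag(\gamma)F_n^{-1}=\tfrac{1}{n}F_n\diag(\gamma)F_n^{-1}=\Gamma$, which is exactly the matrix in the statement. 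This yields the claimed equivalence.

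Finally, for the ``in particular'' clause, I will compute the matrix entries directly: $\Gamma_{jk}=\frac{1}{n^2}\sum_{m=0}^{n-1}\gamma(m)\omega^{m(j-k)}$, so $\Gamma$ is a circulant matrix depending only on $j-k\bmod n$. Under the symmetry assumption $\gamma(m)=\gamma(n-m)$, the substitution $m\mapsto n-m$ shows that $\Gamma_{jk}=\overline{\Gamma_{jk}}$ and $\Gamma_{jk}=\Gamma_{kj}$, so $\Gamma$ is real symmetric.
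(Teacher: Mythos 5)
Your proof is correct and follows essentially the same route as the paper: identify $f$ with its value vector $\lambda=F_n a$, use unitarity of $\tfrac{1}{\sqrt n}F_n$ (Plancherel) to convert the Dirichlet form, and match the entropy terms directly. The only (harmless) difference is in the ``in particular'' clause, where you compute the circulant entries $\Gamma_{jk}=\tfrac{1}{n^2}\sum_m\gamma(m)\omega^{m(j-k)}$ and check realness and symmetry entrywise, whereas the paper argues at the matrix level via $F_n^{-1}=\tfrac1n F_n^{\mathrm H}$ and conjugation by the index-reversal permutation; both verifications are valid.
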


\begin{proof}
    Write $f(x)=\sum_{k=0}^{n-1} a_k\chi_k(x)$ and set $a_f\coloneqq (a_0,\ldots,a_{n-1})^{\mathrm T}$, $\lambda\coloneqq \big(f(0),\ldots,f(n-1)\big)^{\mathrm T}\in\mathbb{R}_+^n$. The verification of $\inner{f,A_{\gamma}f}_{L_2(\mathbb{Z}_n,\mu_n)}=\inner{\lambda,\Gamma\lambda}$ is straightforward by the fact the $\frac{1}{\sqrt{n}}F_n$ is  unitary and $\lambda=F_na_f$. 
Moreover, for $\lambda=\big(f(0),\ldots,f(n-1)\big)^{\mathrm T}$, we have 
\[
\int_{\mathbb{Z}_n} f^2\log f^2\dd\mu_n-\norm{f}_2^2\log\norm{f}_2^2=\frac{1}{n}\sum_{k=0}^{n-1}\lambda_k^2\log(\lambda_k^2)-\frac{\norm{\lambda}_2^2}{n}\log(\frac{\norm{\lambda}_2^2}{n})=\mathrm{H}_n[\lambda].
\]\par

Now we prove that $\Gamma$ is real symmetric if $\gamma(k)=\gamma(n-k)$ for $1\le k\le n-1$. We write $A^{\mathrm H}\coloneqq \overline{A}^{\mathrm T}$ for the Hermitian (conjugate) transpose of a complex matrix $A$. Since $\diag(\gamma)\in M_n(\mathbb{R})$ and $F_n^{-1}=\tfrac{1}{n}F_n^{\mathrm H}$, we have
\[
  \left( F_n\diag(\gamma)F_n^{-1} \right)^{\mathrm H}
  =(F_n^{-1})^{\mathrm H}\diag(\gamma)F_n^{\mathrm H}
  =F_n\diag(\gamma)F_n^{-1},
\]
hence $\Gamma=\tfrac{1}{n}F_n\diag(\gamma)F_n^{-1}$ is Hermitian.

Let $P$ denote the permutation matrix that interchanges the entries $k$ and $n-k$ for $1\le k\le n-1$. If $\gamma(k)=\gamma(n-k)$ for all such $k$, then $\diag(\gamma)$ is invariant under conjugation by $P$. Because $\overline{F_n}=F_n^{\mathrm{H}}=F_nP$ and $P=P^{-1}=P^{\mathrm{T}}$, it follows that 
\[
  \overline{F_n\diag(\gamma)F_n^{-1}}=\overline{F_n}\diag(\gamma)\overline{F_n^{-1}}=F_n P\diag(\gamma)PF_n^{-1}=F_n\diag(\gamma)F_n^{-1}.
\]
Hence $\Gamma\in M_n(\mathbb{R})$ and is symmetric, completing the proof.
\end{proof}

\subsection*{Homogeneous objective and reduction to $\mathbb{S}^{n-1}_+$.}
Given a length function $\psi_n$, define
\[
    f_{\psi_n}(\lambda)\coloneqq 2\inner{\lambda,\Psi(n)\lambda}-\mathrm{H}_n[\lambda],
\]
where $\lambda=(\lambda_0,\ldots,\lambda_{n-1})^{\mathrm{T}}$, $\Psi(n)=\frac{1}{n}F_n\diag(\psi_n)F_n^{-1}$, and we set $x\log(x^2)=0$ at $x=0$. Since $f_{\psi_n}$ is homogeneous of degree $2$, it suffices to verify $f_{\psi_n}\ge 0$ on the positive sphere
\[
    \mathbb{S}^{n-1}_+ \coloneqq \Big\{ \lambda\in\mathbb{R}_+^n : \sum_{k=0}^{n-1}\lambda_k^2=1 \Big\}
\]
in order to conclude $f_{\psi_n}\ge 0$ on $\mathbb{R}^n_+$. We therefore restrict $f_{\psi_n}$ on $\mathbb{S}^{n-1}_+$ and analyze stationary points via the Karush--Kuhn--Tucker (KKT) conditions (see the original sources \cite{MR2936770,MR47303} and textbook treatments \cite{MR3587371,MR2244940}). In brief, the KKT conditions provide necessary first-order conditions for constrained optimization problems with both equality and inequality constraints, extending the classical Lagrange multiplier method by introducing one multiplier for each active inequality constraint, together with the complementary slackness. 
\begin{proposition}[KKT necessary conditions {\cite[Proposition 4.3.1]{MR3587371}, \cite[Theorem 12.1]{MR2244940}}]\label{Prop: KKT conditions}
  Let $\mathcal{E}$ and $\mathcal{I}$ be finite index sets, $g\in C^1(\mathbb{R}^n)$, and $c_i\in C^1(\mathbb{R}^n)$ for $i\in\mathcal{E}\cup \mathcal{I}$. Suppose that $\lambda^*\in \mathbb{R}^n$ is a local minimizer of $g(\lambda)$ subject to the constraints
  \[
      \begin{cases}
        c_i(\lambda)=0,\qquad i\in \mathcal{E},\\
        c_i(\lambda)\ge 0,\qquad i\in \mathcal{I},
      \end{cases}
  \]
  Define the Lagrangian 
  \[
      \mathcal{L}(\lambda,\mu,\nu)=g(\lambda)-\sum_{i\in \mathcal{E}}\mu_i c_i(\lambda)-\sum_{i\in \mathcal{I}}\nu_ic_i(\lambda),\qquad \lambda\in \mathbb{R}^n, \mu\in \mathbb{R}^{\abs{\mathcal{E}}}, \nu\in \mathbb{R}^{\abs{\mathcal{I}}}.
  \]
  If $\lambda^*$ is regular, i.e., the vector set
  \[
      \{\grad c_i(\lambda^*)\}_{i\in \mathcal{E}}\cup\{\grad c_i(\lambda^*)\}_{i\in A(\lambda^*)}
  \]
  are linearly independent, where $A(\lambda^*)\coloneqq\{i\in\mathcal I: c_i(\lambda^*)=0\}$, then there exist unique Lagrange multiplier vectors $\mu^*\in \mathbb{R}^{\abs{\mathcal{E}}}$, $\nu^*\in \mathbb{R}^{\abs{\mathcal{I}}}$ such that 
  \[
      \begin{cases}
        \grad_\lambda\mathcal{L}(\lambda^*,\mu^*,\nu^*)=0,\\
        c_i(\lambda^*)=0,\qquad i\in \mathcal{E},\\
        c_i(\lambda^*)\ge 0,\qquad i\in \mathcal{I},\\
        \nu_i^*c_i(\lambda^*)=0,\qquad i\in \mathcal{I},\\
        \nu_i^*\geq 0,\qquad i\in \mathcal{I}.
      \end{cases}
  \]  
\end{proposition}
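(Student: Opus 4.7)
The plan is the standard tangent-cone route, using the regularity hypothesis (linear independence of active constraint gradients, LICQ) to convert a geometric first-order condition into an algebraic one via Farkas' lemma. The main steps are: (i) characterize the first-order behavior of $g$ at $\lambda^*$ on the tangent cone of the feasible set $\mathcal F=\{\lambda:c_i(\lambda)=0\ (i\in\mathcal E),\ c_i(\lambda)\ge 0\ (i\in\mathcal I)\}$; (ii) use LICQ to identify that tangent cone with the linearized cone; (iii) extract multipliers by a Farkas-type alternative; (iv) read off complementary slackness and uniqueness.

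First I would introduce the linearized feasible cone
\[
    \mathcal{T}_{\mathrm{lin}}(\lambda^*)=\{d\in\mathbb{R}^n:\inner{\grad c_i(\lambda^*),d}=0\ (i\in\mathcal E),\ \inner{\grad c_i(\lambda^*),d}\ge 0\ (i\in A(\lambda^*))\},
\]
and show that every $d\in\mathcal{T}_{\mathrm{lin}}(\lambda^*)$ is tangent to $\mathcal F$ at $\lambda^*$. Under LICQ this is the classical Lyusternik/implicit function construction: with $d$ prescribed, the inactive inequality constraints remain inactive in a neighborhood, and one solves the nonlinear system for the active/equality constraints perturbed along $d$ by the inverse function theorem, producing a $C^1$ curve $\lambda(\tau)=\lambda^*+\tau d+o(\tau)$ lying in $\mathcal F$ for small $\tau\ge 0$. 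Since $\lambda^*$ is a local minimizer, differentiating $g(\lambda(\tau))\ge g(\lambda^*)$ at $\tau=0^+$ gives $\inner{\grad g(\lambda^*),d}\ge 0$ for every $d\in\mathcal{T}_{\mathrm{lin}}(\lambda^*)$.

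Next I would apply Farkas' lemma in the form: if $\inner{a,d}\ge 0$ for all $d$ satisfying the equality and one-sided linear inequalities above, then $a$ lies in the cone $\{\sum_{i\in\mathcal E}\mu_i\grad c_i(\lambda^*)+\sum_{i\in A(\lambda^*)}\nu_i\grad c_i(\lambda^*):\mu_i\in\mathbb{R},\ \nu_i\ge 0\}$. Applied to $a=\grad g(\lambda^*)$, this produces multipliers $\mu^*_i$ ($i\in\mathcal E$) and $\nu^*_i\ge 0$ ($i\in A(\lambda^*)$); setting $\nu_i^*=0$ for $i\in\mathcal I\setminus A(\lambda^*)$ gives $\nu_i^*c_i(\lambda^*)=0$ for all $i\in\mathcal I$ and yields $\grad_\lambda\mathcal L(\lambda^*,\mu^*,\nu^*)=0$, along with primal feasibility and dual nonnegativity. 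Uniqueness of $(\mu^*,\nu^*)$ then follows from LICQ, since the stationarity equation expresses $\grad g(\lambda^*)$ in a basis of a linearly independent family of gradients, so the coefficients are determined.

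The main obstacle is step (ii), the equality of tangent cones under LICQ; the inverse-function-theorem construction of feasible curves in arbitrary tangent directions is the only genuinely analytic ingredient, while everything else is linear algebra. Since the statement is a textbook fact cited verbatim from~\cite{MR3587371,MR2244940}, I would not reproduce details beyond this sketch and would simply refer to those references for the full argument.
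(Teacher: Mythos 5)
Your sketch is the standard LICQ route (tangent cone equals linearized cone via the implicit function theorem, then Farkas' lemma for the multipliers, then linear independence for uniqueness), which is precisely the argument in the cited references; the paper itself offers no proof of this proposition and simply quotes it from \cite{MR3587371,MR2244940}. The proposal is correct and consistent with that standard treatment.
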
\par

We now apply Proposition~\ref{Prop: KKT conditions} to the minimization of the particular entropy functional relevant to our problem on $\mathbb{S}^{n-1}_+$. We can absorb the multiplier $\mu$ into a normalization.

\begin{lemma}\label{Lem: KKT condition absorbing mu}
Let $Q\in M_n(\mathbb{R})$ be a symmetric matrix and set
\[
    g(\lambda)=2\inner{\lambda,Q\lambda}-\mathrm{H}_n[\lambda].
\]
If the system
\begin{equation}\label{Eqns: KKT condition absorbing mu}
\begin{cases}
4Q\lambda-\dfrac{4}{n}\begin{pmatrix}
\lambda_0\log(\lambda_0)\\
\vdots\\
\lambda_{n-1}\log(\lambda_{n-1})
\end{pmatrix}-\nu=0,\\
0<\norm{\lambda}_2^2<n,\\
\lambda_j\geq 0,\qquad 0\le j\le n-1,\\
\lambda_j\nu_j =0,\qquad 0\le j\le n-1,\\
\nu_j\ge 0,\qquad 0\le j\le n-1,
\end{cases}
\end{equation}
has no solution, then $g\ge 0$ on $\mathbb{S}^{n-1}_+$.
\end{lemma}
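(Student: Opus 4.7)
The plan is to argue by contrapositive: assume $g$ attains a negative value on $\mathbb{S}^{n-1}_+$ and construct a solution of \eqref{Eqns: KKT condition absorbing mu}. Since $\mathbb{S}^{n-1}_+$ is compact and $g$ is continuous on $\mathbb{R}_+^n$ (indeed $C^1$, using $x\log x^2\to 0$ as $x\to 0$ with the convention $0\log 0=0$), $g$ attains its minimum at some $\lambda^*\in \mathbb{S}^{n-1}_+$ with $g(\lambda^*)<0$. I would then apply Proposition~\ref{Prop: KKT conditions} to $\lambda^*$ with equality constraint $c_0(\lambda)=\norm{\lambda}_2^2-1$ (multiplier $\mu^*$) and inequality constraints $c_j(\lambda)=\lambda_j\ge 0$ for $0\le j\le n-1$ (multipliers $\nu^*_j$). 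Regularity at $\lambda^*$ is immediate: the active constraint gradients $\{e_j:\lambda^*_j=0\}$ together with $\grad c_0(\lambda^*)=2\lambda^*$ are linearly independent, since $\lambda^*$ has at least one strictly positive coordinate.

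A direct computation gives
\[
  \grad g(\lambda) = 4Q\lambda - \frac{4}{n}\bigl(\lambda_j\log\lambda_j\bigr)_{j=0}^{n-1} + \frac{2\log(\norm{\lambda}_2^2/n)}{n}\,\lambda,
\]
so stationarity at $\norm{\lambda^*}_2^2=1$ reads
\[
  4Q\lambda^* - \frac{4}{n}\bigl(\lambda^*_j\log\lambda^*_j\bigr)_{j=0}^{n-1} = \Bigl(2\mu^* + \frac{2\log n}{n}\Bigr)\lambda^* + \nu^*.
\]
This is the identity I then need to massage into the form of \eqref{Eqns: KKT condition absorbing mu}.

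The main obstacle---and the point of the lemma---is eliminating both the Lagrange multiplier $\mu^*$ and the $\log n$ normalization term. I exploit the degree-$2$ homogeneity of $g$ and rescale $\tilde\lambda := s\lambda^*$ for $s>0$ to be chosen. Using the affine transformation rule $\tilde\lambda_j\log\tilde\lambda_j = s\lambda^*_j\log s + s\lambda^*_j\log\lambda^*_j$ (valid under the convention above), the rescaled left-hand side equals $s$ times the unscaled left-hand side minus an extra $\tfrac{4s\log s}{n}\lambda^*$. Substituting the stationarity identity and requiring the coefficient of $\lambda^*$ on the right-hand side to vanish forces the choice $s = \sqrt{n}\,e^{n\mu^*/2}$, after which
\[
  4Q\tilde\lambda - \frac{4}{n}\bigl(\tilde\lambda_j\log\tilde\lambda_j\bigr)_{j=0}^{n-1} = s\nu^*.
\]
Setting $\tilde\nu := s\nu^*$, non-negativity of $\tilde\nu$ and $\tilde\lambda$ together with complementary slackness $\tilde\nu_j\tilde\lambda_j = s^2\nu^*_j\lambda^*_j = 0$ are inherited from the KKT data.

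Finally I verify the remaining constraint $0<\norm{\tilde\lambda}_2^2<n$. Taking the inner product of the stationarity identity with $\lambda^*$ and invoking Euler's identity $\inner{\grad g(\lambda^*),\lambda^*}=2g(\lambda^*)$ (from degree-$2$ homogeneity) together with $\inner{\nu^*,\lambda^*}=0$ yields the clean identification $\mu^*=g(\lambda^*)$. Since $g(\lambda^*)<0$ by assumption, $\mu^*<0$ and hence $\norm{\tilde\lambda}_2^2 = s^2 = n\,e^{n\mu^*} \in (0,n)$. Thus $\tilde\lambda$ solves \eqref{Eqns: KKT condition absorbing mu}, contradicting the hypothesis; therefore $g\ge 0$ on $\mathbb{S}^{n-1}_+$.
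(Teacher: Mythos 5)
Your proposal is correct and follows essentially the same route as the paper's proof: apply the KKT conditions at a minimizer on $\mathbb{S}^{n-1}_+$, use Euler's identity for the degree-$2$ homogeneous $g$ together with complementary slackness to identify $\mu^*=g(\lambda^*)<0$, and rescale by $s=\sqrt{n}\,e^{n\mu^*/2}$ (the paper's $c^*=e^{(n\mu^*+\log n)/2}$) to absorb the multiplier and the $\log n$ term into a solution of \eqref{Eqns: KKT condition absorbing mu}. The only cosmetic difference is that you invoke compactness to work with the global minimizer, whereas the paper argues from an arbitrary stationary point with negative value.
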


\begin{proof}
  Consider the Lagrangian associated with the minimization of $g$ on $\mathbb{S}^{n-1}_+$:
\[
  \mathcal{L}(\lambda,\mu,\nu)= g(\lambda)-\mu\left(\sum_{k=0}^{n-1}\lambda_k^2-1\right)-\sum_{k=0}^{n-1}\nu_k\lambda_k
\]
with multipliers $\mu\in\mathbb{R}$ and $\nu\in\mathbb{R}^n_{+}$ associated to the constraints $\norm{\lambda}_2=1$ and $\lambda_j\ge 0$. Notice the set 
\[
  \{2\lambda\}\cup\{e_j\}_{j\in A(\lambda)}
\]
are linear independent, where $(e_j)_{0\leq j\leq n-1}$ denotes the canonical basis in $\mathbb{R}^n$, and $A(\lambda)=\{j:\lambda_j=0\}$. Hence, by Proposition~\ref{Prop: KKT conditions}, for any local minimizer $\lambda^*$, there exist unique $\mu^*\in\mathbb{R}$ and $\nu^*\in\mathbb{R}^n_{+}$ such that the following system holds:
\begin{equation}\label{Eqns: minimum of fn geq 0}
\begin{cases}
\grad_{\lambda} \mathcal{L}
= 4Q\lambda^*-\dfrac{2}{n}\begin{pmatrix}
2\lambda^*_0\log(\lambda^*_0)+\lambda^*_0\\
\vdots\\
2\lambda^*_{n-1}\log(\lambda^*_{n-1})+\lambda^*_{n-1}
\end{pmatrix}+\dfrac{2}{n}\begin{pmatrix}
\lambda^*_0\log(\frac{\norm{\lambda^*}_2^2}{n})+\lambda^*_0\\
\vdots\\
\lambda^*_{n-1}\log(\frac{\norm{\lambda^*}_2^2}{n})+\lambda^*_{n-1}
\end{pmatrix}
-2\mu^*\lambda^*-\nu^*=0,\\
\norm{\lambda^*}_2^2=1,\\
\lambda^*_j\geq 0,\qquad 0\le j\le n-1,\\
\lambda^*_j\nu^*_j =0,\qquad 0\le j\le n-1,\\
\nu^*_j\ge 0,\qquad 0\le j\le n-1.
\end{cases}
\end{equation}
Write
\[
\begin{aligned}
g(\lambda)
&=\mathcal{L}(\lambda,\mu,\nu)
   +\mu\left(\sum_{k=0}^{n-1}\lambda_k^2-1\right)
   +\sum_{k=0}^{n-1}\nu_k\lambda_k.
\end{aligned}
\]
Using Euler's theorem for homogeneous functions of degree $2$ (namely $\inner{\grad g(\lambda^*),\lambda^*}=2g(\lambda^*)$), together with
$\grad_{\lambda}\mathcal{L}(\lambda^*,\mu^*,\nu^*)=0$, $\norm{\lambda^*}_2^2=1$, and the fourth line of \eqref{Eqns: minimum of fn geq 0}, we obtain
\[
    2g(\lambda^*)=2\mu^*.
\]\par 

Assume that there exists a stationary point $\lambda^*$ with $g(\lambda^*)<0$, then $\mu^*=g(\lambda^*)<0$. Let $c^*=e^{\frac{n\mu^*+\log n}{2}}<\sqrt{n}$. So we have
\[
\begin{aligned}
0
&=4Q(c^*\lambda^*)-\frac{4}{n}
\begin{pmatrix}
c^*\lambda^*_0\log(c^*\lambda^*_0)-c^*\lambda^*_0\log c^*\\
\vdots\\
c^*\lambda^*_{n-1}\log(c^*\lambda^*_{n-1})-c^*\lambda^*_{n-1}\log c^*
\end{pmatrix}
-\frac{2}{n}c^*\lambda^*\\
&\quad+\frac{2}{n}\begin{pmatrix}
  c^*\lambda^*_0\log(\frac{1}{n})+c^*\lambda^*_0\\
  \vdots\\
  c^*\lambda^*_{n-1}\log(\frac{1}{n})+c^*\lambda^*_{n-1}
\end{pmatrix}-2\mu^*c^*\lambda^*-c^*\nu^*\\
&=4Q(c^*\lambda^*)-\frac{4}{n}
\begin{pmatrix}
c^*\lambda^*_0\log(c^*\lambda^*_0)\\
\vdots\\
c^*\lambda^*_{n-1}\log(c^*\lambda^*_{n-1})
\end{pmatrix}
+\Big(\frac{4}{n}\log c^*-\frac{2}{n}+\frac{2}{n}(-\log(n)+1)-2\mu^*\Big)c^*\lambda^*-c^*\nu^*\\
&=4Q(c^*\lambda^*)-\frac{4}{n}
\begin{pmatrix}
c^*\lambda^*_0\log(c^*\lambda^*_0)\\
\vdots\\
c^*\lambda^*_{n-1}\log(c^*\lambda^*_{n-1})
\end{pmatrix}
-c^*\nu^*,
\end{aligned}
\]
and $0<\norm{c^*\lambda^*}_2^2<n$. Thus $(c^*\lambda^*,c^*\nu^*)$ solves \eqref{Eqns: KKT condition absorbing mu}, contradicting the assumption. Therefore no such $\lambda^*$ exists and $g\ge 0$ on $ \mathbb{S}^{n-1}_+$.
\end{proof}

\section{LSI on $\mathbb{Z}_4$ and $\mathbb{Z}_6$ with modified weights}\label{Sec: Log Sobolev inequality n=4 and n=6}
Building on the KKT framework from Section~\ref{Sec: DFT and KKT analysis}, we now establish the LSI on $\mathbb{Z}_4$ and $\mathbb{Z}_6$ with tighter estimates. To this end, we introduce suitably modified weight functions, which are crucial both for making the resulting KKT systems tractable and for ensuring that the induction procedure in the next section applies.

\begin{theorem}\label{Thm: Log Sobolev inequality n=4}
For the weight function
\[
  \phi_4(j)=
  \begin{cases}
    \psi_4(j),& j\neq 2,\\
    \frac{8}{5},& j=2,
  \end{cases}
\]
on $\mathbb{Z}_4$, we have
\[
  \int_{\mathbb{Z}_4} f^2\log f^2\dd\mu_4-\norm{f}_2^2\log\norm{f}_2^2\le 2\inner{f,A_{\phi_4} f}_{L_2(\mathbb{Z}_4,\mu_4)}
\]
for all $f\in L_2^+(\mathbb{Z}_4,\mu_4)$.
\end{theorem}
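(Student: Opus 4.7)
The plan is to apply the KKT machinery from Section~\ref{Sec: DFT and KKT analysis}. By Lemma~\ref{Lem: Explicit form of Log Sobolev ineq}, the desired inequality is equivalent to the finite-dimensional form
\[
  \mathrm{H}_4[\lambda]\le 2\inner{\lambda,\Phi(4)\lambda},\qquad \lambda\in\mathbb{R}_+^4,
\]
where $\Phi(4)=\tfrac{1}{4}F_4\diag(\phi_4)F_4^{-1}$. Since $\phi_4(1)=\phi_4(3)$, $\Phi(4)$ is a real symmetric circulant matrix; a direct computation with $\omega=i$ yields diagonal entry $\tfrac{9}{40}$, nearest-neighbor entry $-\tfrac{1}{10}$, and opposite entry $-\tfrac{1}{40}$. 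Applying Lemma~\ref{Lem: KKT condition absorbing mu} with $Q=\Phi(4)$ and $n=4$, it suffices to show that the KKT system
\[
  4\Phi(4)\lambda=(\lambda_j\log\lambda_j)_{0\le j\le 3}+\nu,\quad 0<\|\lambda\|_2^2<4,\quad \lambda_j,\nu_j\ge 0,\quad \lambda_j\nu_j=0,
\]
admits no solution.

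I would proceed by case analysis on the active set $A(\lambda)=\{j:\lambda_j=0\}$, exploiting the dihedral symmetry of the problem (cyclic shift from the circulant structure of $\Phi(4)$, and the reflection $j\mapsto -j$ inherited from $\phi_4(1)=\phi_4(3)$) to keep only one representative per $D_4$-orbit. For $|A|\ge 1$ the system decouples on the support to at most three coupled transcendental equations, and the sign constraints $\nu_j\ge 0$ at the zero coordinates become scalar inequalities that can be ruled out by elementary monotonicity of $t\mapsto t\log t$ together with the size constraint $\|\lambda\|_2^2<4$.

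The main obstacle is the interior case $|A|=\emptyset$, in which $\nu=0$ and the full equation $4\Phi(4)\lambda=(\lambda_j\log\lambda_j)_j$ couples all four coordinates transcendentally. Two structural reductions should help here. First, summing the four equations and using $\Phi(4)\mathbf{1}=0$ (which holds because $\phi_4(0)=0$) yields the orthogonality identity $\sum_j\lambda_j\log\lambda_j=0$. Second, the reflection symmetry pairs any KKT solution with its swap $\lambda_1\leftrightarrow\lambda_3$, so one can first analyze the reflection-invariant slice $\lambda_1=\lambda_3=:b$, $\lambda_0=a$, $\lambda_2=c$. Subtracting the row-$0$ and row-$2$ equations on this slice yields
\[
  a-c=a\log a-c\log c,
\]
and an elementary analysis of $t\mapsto t\log t$ shows this forces either $a=c$ or $a,c$ strictly straddling $1$. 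The symmetric sub-case $a=c$ reduces to a two-variable system whose only solution is $a=b=c=1$, which sits exactly on the boundary $\|\lambda\|_2^2=4$; the value $\phi_4(2)=\tfrac{8}{5}$ is engineered precisely so that this configuration is the unique scaling-normalized critical candidate and saturates the LSI. The remaining straddling sub-case, together with any asymmetric KKT points outside the reflection slice, I expect to be eliminated by a careful monotonicity/perturbation argument combined with the strict size bound $\|\lambda\|_2^2<4$; this is where I anticipate the bulk of the technical work to lie, and where the exact numerical value of the modified weight must be used.
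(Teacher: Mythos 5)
Your setup is correct and matches the paper's: the reduction via Lemma~\ref{Lem: Explicit form of Log Sobolev ineq}, the matrix $\Phi(4)$ with entries $\tfrac{9}{40},-\tfrac{1}{10},-\tfrac{1}{40}$, the application of Lemma~\ref{Lem: KKT condition absorbing mu}, the elimination of boundary cases $\lambda_j=0$, and the treatment of the fully symmetric configuration (where the unique candidate $\lambda=(1,1,1,1)$ is excluded by the strict constraint $\|\lambda\|_2^2<4$) are all sound and agree with the paper's Lemma~\ref{Lem: pairwise-opposite-equality, n=4}. The identity $\sum_j\lambda_j\log\lambda_j=0$ from $\Phi(4)\mathbf{1}=0$ is also a correct observation and is used (in equivalent form) in the paper.

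However, there is a genuine gap: the case you defer — the ``straddling'' sub-case $a\ne c$ with $a,c$ on opposite sides of $1$ — is precisely the technical core of the proof, and ``a careful monotonicity/perturbation argument'' does not describe an actual argument. Two things are missing. First, you must justify restricting to the slice $\lambda_1=\lambda_3$: the existence of a reflected partner solution does not force a solution to lie in the invariant slice. The paper closes this by showing (its Step~2) that any unequal opposite pair satisfies $\lambda_{j_0}+\lambda_{j_0+2}>2$, hence $\lambda_{j_0}^2+\lambda_{j_0+2}^2>2$, so two simultaneously unequal pairs would violate $\|\lambda\|_2^2<4$; this is what forces the other pair to be equal (and $<1$). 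Second, and more seriously, the remaining system in $(\lambda_{j_0},\lambda_{j_0+2},\lambda_\ell)$ must actually be shown to have no solution. The paper does this by setting $\lambda_{j_0+2}=r\lambda_{j_0}$, solving $\mathcal{L}_{j_0}=\mathcal{L}_{j_0+2}$ explicitly for $\lambda_{j_0}=er^{-r/(r-1)}$, eliminating $\log\lambda_\ell$ in two independent ways to obtain a single transcendental identity $h(r)=0$, and then proving $h>0$ on $(1,\infty)$ via an eight-stage chain of auxiliary functions $h_1,\dots,h_8$ whose positivity is established by repeated differentiation; this is where the exact value $\phi_4(2)=\tfrac{8}{5}$ enters quantitatively. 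Without carrying out this elimination and the ensuing monotonicity chain (or an equivalent rigorous substitute), the proof is incomplete at exactly the point where the inequality is tight and the modified weight matters.
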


\begin{theorem}\label{Thm: Log Sobolev inequality n=6}
For the weight function
\[
  \phi_6(j)=
  \begin{cases}
    \psi_6(j),& j\neq 3,\\
    1,& j=3,
  \end{cases}
\]
on $\mathbb{Z}_6$, we have
\[
  \int_{\mathbb{Z}_6} f^2\log f^2\dd\mu_6-\norm{f}_2^2\log\norm{f}_2^2\le 2\inner{f,A_{\phi_6} f}_{L_2(\mathbb{Z}_6,\mu_6)}
\]
for all $f\in L_2^+(\mathbb{Z}_6,\mu_6)$.
\end{theorem}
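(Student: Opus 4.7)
The plan is to apply the KKT framework of Section~\ref{Sec: DFT and KKT analysis} and exploit a pairing symmetry that the specific value $\phi_6(3)=1$ creates in the KKT system. By Lemma~\ref{Lem: Explicit form of Log Sobolev ineq}, the LSI is equivalent to $\mathrm{H}_6[\lambda]\le 2\inner{\lambda,\Phi_6\lambda}$ on $\mathbb{R}_+^6$, where $\Phi_6=\tfrac{1}{6}F_6\diag(\phi_6)F_6^{-1}$. First I would compute $\Phi_6$ in a form that exposes the symmetry. Let $T_3$ denote the shift-by-$3$ operator on $L_2(\mathbb{Z}_6,\mu_6)$, which satisfies $T_3\chi_k=(-1)^k\chi_k$. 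Since $\phi_6(0)=0$, $\phi_6(k)=2$ on nonzero even $k$, and $\phi_6(k)=1$ on odd $k$, the generator decomposes as $A_{\phi_6}=\tfrac{3}{2}\,\mathrm{id}+\tfrac{1}{2}T_3-2\mathbb{E}$; pointwise this reads $(A_{\phi_6}f)(j)=\tfrac{1}{2}(3f(j)+f(j+3))-2\mathbb{E}[f]$. In $\lambda=(f(0),\dots,f(5))^{\mathrm T}$ coordinates, $(\Phi_6\lambda)_j=\tfrac{1}{12}(3\lambda_j+\lambda_{j+3}-4\bar{\lambda})$ with $\bar{\lambda}=\tfrac{1}{6}\sum_k\lambda_k$. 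The crucial feature is that $\Phi_6$ couples $\lambda_j$ only to $\lambda_{j+3}$ (plus the global mean); this would fail for the original word length $\psi_6$, since $\psi_6(3)=3$ would introduce a diagonal projection onto $\chi_3$ and destroy the clean pair structure.

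Applying Lemma~\ref{Lem: KKT condition absorbing mu}, it suffices to rule out any solution $(\lambda,\nu)$ of the KKT system
\[
3\lambda_j+\lambda_{j+3}-4\bar{\lambda}=2\lambda_j\log\lambda_j+3\nu_j,\quad\lambda_j,\nu_j\ge 0,\ \lambda_j\nu_j=0,\ 0<\norm{\lambda}_2^2<6.
\]
For each of the three pairs $(j,j+3)$ with $j\in\{0,1,2\}$, subtracting the two paired equations yields, whenever $\lambda_j,\lambda_{j+3}>0$, the pairing identity
\[
\lambda_j(\log\lambda_j-1)=\lambda_{j+3}(\log\lambda_{j+3}-1),
\]
while adding them gives
\[
4(\lambda_j+\lambda_{j+3})-8\bar{\lambda}=2\bigl(\lambda_j\log\lambda_j+\lambda_{j+3}\log\lambda_{j+3}\bigr)+3(\nu_j+\nu_{j+3}).
\]
Since $x\mapsto x(\log x-1)$ is strictly convex on $\mathbb{R}_+$ with unique minimum $-1$ at $x=1$, each interior pair either lies on the diagonal $\lambda_j=\lambda_{j+3}$ or is reflected, with one coordinate in $(0,1)$ and the other in $(1,\infty)$.

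I would then conclude by a finite case analysis. Using the cyclic and reflection symmetries of $\mathbb{Z}_6$ to cut down the distinct zero-patterns of $\lambda$, and within each zero-pattern splitting on which surviving pairs are diagonal versus reflected, one rules out every putative KKT solution. In the fully diagonal interior regime, the problem collapses to an LSI on $\mathbb{Z}_3$ with the variance Dirichlet form and constant $4$, which can be verified directly. In the boundary regimes, the complementary slackness $\lambda_j\nu_j=0$ together with $\nu_{j+3}\ge 0$ yields a sign constraint on the survivors that is easily combined with the sum equation and the norm bound $\norm{\lambda}_2^2<6$ to force a contradiction. In the reflected interior regimes, the pairing identity and the summed equation are combined to pin down the surviving two-parameter families, which are then ruled out.

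The main obstacle I anticipate is the mixed interior regime in which some pairs are diagonal and the remaining pairs are reflected: the three pairs interact only through $\bar{\lambda}$, but the asymmetric coefficients $3$ and $1$ in the coupling $3\lambda_j+\lambda_{j+3}$ combined with the transcendental pairing identity preclude any clean decoupling. To close this case one must squeeze the system using strict convexity of $x(\log x-1)$, the monotonicity of $x\mapsto x\log x$ on $(e^{-1},\infty)$, and the quadratic norm constraint simultaneously; this is where the specific value $\phi_6(3)=1$ is essential, as it is precisely the choice for which the interior KKT equations align into a symmetric pair structure.
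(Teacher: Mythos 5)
Your setup is correct and coincides with the paper's: the operator identity $A_{\phi_6}=\tfrac{3}{2}\,\mathrm{id}+\tfrac{1}{2}T_3-2\mathbb{E}$ checks out against the paper's matrix $\Phi(6)$, your KKT system is equivalent to the paper's system \eqref{Eqns: Lagrange for n=6, (0,1,2,1,2,1)}, and the pairing identity obtained by subtracting the equations for $j$ and $j+3$ is exactly the paper's relation $F(\lambda_j)=F(\lambda_{j+3})$ with $F(x)=-\tfrac{2}{3}x\log x+\tfrac{2}{3}x$. The observation that each interior pair is either diagonal or reflected across $1$ is also the paper's Step 2.

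However, there is a genuine gap: the case you yourself flag as "the main obstacle" --- a mix of reflected and diagonal pairs --- is precisely the case the paper spends most of its effort on, and your proposal does not resolve it. The paper closes it in two stages that are absent from your sketch. First (Step 2 and Step 3 of Lemma~\ref{Lem: one opposite pair unequal, n=6}), a quantitative strengthening of the reflection property: comparing $F(x)$ with $F(2-x)$ shows every reflected pair satisfies $\lambda_j+\lambda_{j+3}>2$, hence $\lambda_j^2+\lambda_{j+3}^2>2$, so the constraint $\|\lambda\|_2^2<6$ forces \emph{at most one} reflected pair; a further monotonicity comparison of the two remaining equations forces the two diagonal pairs to share a common value below $1$. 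Your "finite case analysis over zero-patterns and diagonal/reflected splits" never reaches this reduction, and without it the configuration space is not finite-dimensional in any useful sense. Second (Step 4), even after the reduction the system is a genuinely transcendental two-parameter family: writing $\lambda_{j_0+3}=r\lambda_{j_0}$ one eliminates everything down to a single scalar equation $h(r)=0$, and ruling it out for $r>1$ requires showing $h>0$ there, which the paper does via an eight-level cascade of auxiliary functions $h_1,\dots,h_8$ obtained by repeated weighted differentiation. Saying the surviving families "are then ruled out" by combining convexity, monotonicity, and the norm constraint is not an argument; the specific quantitative work in Steps 2--4 is the mathematical content of the theorem, and it is missing. (Minor additional point: your claim that the fully diagonal regime reduces to a $\mathbb{Z}_3$ LSI with constant $4$ "verified directly" also needs an actual verification; the paper instead shows the reduced stationarity equations force $\lambda_0=\lambda_1=\lambda_2\in(0,1)$ and derives a sign contradiction, which is short but not the same as invoking a known $\mathbb{Z}_3$ inequality.)
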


Since $\phi_n\le \psi_n$ pointwise, we have $\inner{f,A_{\phi_n}f}\leq \inner{f,A_{\psi_n}f}$ for $f\in L_2^+(\mathbb{Z}_n,\mu_n)$. Therefore, the two theorems above yield LSI for $\psi_4$ and $\psi_6$. Although the $4$--LSI with the length function $\psi_4(j)=\min(j,4-j)$ can be deduced from the known hypercontractivity of $(P_t)_{t\in\mathbb{R}_+}$ on $\mathbb{Z}_4$, we show that $4$--LSI still holds for the smaller weight $\phi_4$. This choice is advantageous for the induction in the next section: the smaller weight $\phi_4$ is suitable for the base step from $4$--LSI to $8$--LSI in the $n\to 2n$ comparison, so that the LSIs can be inductively obtained for all $n=8\cdot 2^k$. For the choice of $\phi_6$, besides serving as the base step from $6$--LSI to $12$--LSI, it also introduces a symmetry in the nonlinear KKT system, which makes the analysis possible in the proof below.

We first prove Theorem~\ref{Thm: Log Sobolev inequality n=6}. The proof of Theorem~\ref{Thm: Log Sobolev inequality n=4} follows the same strategy route. Set 
\[
\Phi(6)=\frac{1}{6}F_6\diag(\phi_6)F_6^{-1}
=\begin{pmatrix}[1.2]
\frac{7}{36}  & -\frac{1}{18} & -\frac{1}{18} & \frac{1}{36}  & -\frac{1}{18} & -\frac{1}{18} \\
-\frac{1}{18} & \frac{7}{36}  & -\frac{1}{18} & -\frac{1}{18} & \frac{1}{36}  & -\frac{1}{18} \\
-\frac{1}{18} & -\frac{1}{18} & \frac{7}{36}  & -\frac{1}{18} & -\frac{1}{18} & \frac{1}{36}  \\
\frac{1}{36}  & -\frac{1}{18} & -\frac{1}{18} & \frac{7}{36}  & -\frac{1}{18} & -\frac{1}{18} \\
-\frac{1}{18} & \frac{1}{36}  & -\frac{1}{18} & -\frac{1}{18} & \frac{7}{36}  & -\frac{1}{18} \\
-\frac{1}{18} & -\frac{1}{18} & \frac{1}{36}  & -\frac{1}{18} & -\frac{1}{18} & \frac{7}{36}
\end{pmatrix}.
\]
Define
\[
    f_{\phi_6}(\lambda)
    = 2\inner{\lambda,\Phi(6)\lambda}-\mathrm{H}_6[\lambda].
\]
By Lemma~\ref{Lem: KKT condition absorbing mu}, $f_{\phi_6}\ge 0$ on $\mathbb{R}_+^6$ follows if the system below has no solution:
\begin{equation}\label{Eqns: Lagrange for n=6, (0,1,2,1,2,1)}
\begin{cases}
-\frac{2}{3}\lambda_j\log(\lambda_j)
  +\Big(\frac{7}{9}\lambda_j-\frac{2}{9}\sum_{k\neq j,j+3}\lambda_k+\frac{1}{9}\lambda_{j+3}\Big)
  -\nu_j=0,\qquad 0\le j\le 5,\\
0<\sum_{k=0}^5 \lambda_k^2 <6,\\
\lambda_j\ge 0,\qquad 0\le j\le 5,\\
\lambda_j\nu_j=0,\qquad 0\le j\le 5,\\
\nu_j\ge 0,\qquad 0\le j\le 5,
\end{cases}
\end{equation}
where indices are understood modulo $6$.\par

We divide the analysis of \eqref{Eqns: Lagrange for n=6, (0,1,2,1,2,1)} into two cases.

\begin{lemma}\label{Lem: pairwise-opposite-equality, n=6}
The system \eqref{Eqns: Lagrange for n=6, (0,1,2,1,2,1)} has no solution in the region 
\[
  \{(\lambda,\nu)\in R^{6}_+\times R^{6}_+: \forall j\in\{0,1,2\}, \lambda_{j}=\lambda_{j+3}\}.
\]
\end{lemma}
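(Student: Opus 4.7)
The plan is to exploit the imposed symmetry $\lambda_j=\lambda_{j+3}$ to collapse the six stationary equations in~\eqref{Eqns: Lagrange for n=6, (0,1,2,1,2,1)} into three equations in the free parameters $a\coloneqq\lambda_0=\lambda_3$, $b\coloneqq\lambda_1=\lambda_4$, $c\coloneqq\lambda_2=\lambda_5$, and then reduce that system to a single one-variable equation that can be fully solved. A direct substitution shows the $j$-th and $(j+3)$-th stationarity equations coincide under the symmetry, forcing $\nu_j=\nu_{j+3}$, and each becomes
\[
-\tfrac{2}{3}a\log a+\tfrac{8}{9}a-\tfrac{4}{9}(b+c)=\nu_0,
\]
together with its cyclic analogues for $b$ and $c$. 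A short boundary check next rules out $abc=0$: if $a=0$ the $j=0$ equation reads $-\tfrac{4}{9}(b+c)=\nu_0\ge 0$, forcing $b=c=0$ and contradicting $\norm{\lambda}_2^2>0$. Hence $a,b,c>0$ and complementary slackness gives $\nu_0=\nu_1=\nu_2=0$.

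Summing the three reduced equations yields the clean identity $a\log a+b\log b+c\log c=0$, while subtracting them pairwise gives $(x\log x-y\log y)/(x-y)=2$ whenever $x\ne y$ in $\{a,b,c\}$. Applying the mean value theorem to $x\mapsto x\log x$, whose derivative is $1+\log x$, this forces the point $e$ to lie in each open interval determined by the distinct pairs. The fully distinct case is then immediate to rule out: if $a<b<c$, the point $e$ would have to belong to both $(a,b)$ and $(b,c)$, which are disjoint. The fully symmetric case $a=b=c$, combined with the summation identity, forces $a=1$ and hence $\norm{\lambda}_2^2=6$, violating the open constraint $\norm{\lambda}_2^2<6$.

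The only remaining configuration is $a=b\ne c$ up to relabeling, and this is where the core work lies. The idea is to eliminate $c$ from the $j=0$ equation to obtain $c=a-\tfrac{3}{2}a\log a$, substitute into the $j=2$ equation, and pass to the ratio $r\coloneqq c/a$; this reduces the whole system to the single transcendental equation
\[
G(r)\coloneqq r\log r-\tfrac{2}{3}(r-1)(r+2)=0.
\]
The main non-routine step is then a univariate monotonicity argument for $G$: the plan is to show $G'(r)=\log r-(4r-1)/3<0$ on $(0,\infty)$ by noting that $(4r-1)/3-\log r$ is convex in $r$ with a strictly positive minimum at $r=3/4$. Combined with $G(0^+)=4/3>0$ and $G(1)=0$, this identifies $r=1$ as the unique zero of $G$, giving $c=a$ and contradicting the case hypothesis. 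I expect the main obstacle to be arranging the case distinction via the mean value theorem cleanly; once that structure is in place, the univariate analysis of $G$ is elementary.
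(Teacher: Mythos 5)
Your proof is correct, and after the (identical) symmetry reduction and positivity step it diverges from the paper's argument in how it forces $\lambda_0=\lambda_1=\lambda_2$. The paper rewrites each reduced equation as $-\tfrac{2}{3}\lambda_j\log\lambda_j+\tfrac{4}{3}\lambda_j=\tfrac{4}{9}(\lambda_0+\lambda_1+\lambda_2)$, observes that the left-hand side is a strictly increasing function of $\lambda_j$ on $(0,e)$ while the norm constraint forces $\lambda_j<\sqrt{3}<e$, and concludes equality of all three coordinates in one line; the contradiction then comes from $-\tfrac{2}{3}\lambda_j\log\lambda_j>0$ for $\lambda_j\in(0,1)$, which is equivalent to your use of the summation identity $\sum\lambda_j\log\lambda_j=0$. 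You instead derive the pairwise identity $(x\log x-y\log y)/(x-y)=2$, invoke the mean value theorem, and split into three cases, of which the case $a=b\ne c$ costs you a full univariate analysis of $G(r)=r\log r-\tfrac{2}{3}(r-1)(r+2)$ (which I checked: the derivation of $G$, the sign of $G'$, and the endpoint values are all right). Note, however, that your own MVT observation already yields the paper's shortcut: any distinct pair among $\{a,b,c\}$ forces $e$ to lie strictly between them, while the constraint $2(a^2+b^2+c^2)<6$ gives $a,b,c<\sqrt{3}<e$, so all three values must coincide and the entire $G(r)$ analysis (and the disjoint-intervals case) is unnecessary. So your proof is valid but does substantially more work than needed; the norm constraint is the lever that makes the equality step immediate.
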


\begin{proof} 
Assume $(\lambda,\nu)$ is a solution in the above region. From the first line of
\eqref{Eqns: Lagrange for n=6, (0,1,2,1,2,1)} we get $\nu_j=\nu_{j+3}$ and, grouping opposite indices, the system \eqref{Eqns: Lagrange for n=6, (0,1,2,1,2,1)} reduces to
\begin{equation}\label{Eqns: Lagrange for n=6, (0,1,2,1,2,1), opposite equal with nu_j}
  \begin{cases}
-\frac{2}{3}\lambda_j\log(\lambda_j)+\frac{8}{9}\lambda_j-\nu_j-\frac{4}{9}(\lambda_{j+1}+\lambda_{j+2})=0,\qquad 0\leq j\leq 2,\\
0<\sum_{k=0}^2\lambda_k^2<3,\\
\lambda_j\ge 0,\qquad 0\le j\le 2,\\
\nu_j\lambda_j=0,\qquad 0\le j\le 2,\\
\nu_j\ge 0,\qquad 0\le j\le 2,
\end{cases}
\end{equation}
where indices are understood modulo $3$.\par

If $\lambda_\ell=0$ for some $\ell\in\{0,1,2\}$, taking $j=\ell$ in the first line of \eqref{Eqns: Lagrange for n=6, (0,1,2,1,2,1)} gives $\frac{4}{9}(\lambda_{\ell+1}+\lambda_{\ell+2})=-\nu_\ell\le 0$, hence $\lambda_{\ell+1}=\lambda_{\ell+2}=0$, contradicting $\sum_{k=0}^2\lambda_k^2>0$. Thus $\lambda_j>0$ for all $j$, and from $\nu_j\lambda_j=0$ in the fourth line of \eqref{Eqns: Lagrange for n=6, (0,1,2,1,2,1), opposite equal with nu_j} we get $\nu_j=0$ for $0\le j\le 2$. Therefore, we have
\begin{equation}\label{Eqns: Lagrange for n=6, (0,1,2,1,2,1), opposite equal}
  \begin{cases}
-\frac{2}{3}\lambda_j\log(\lambda_j)+\frac{4}{3}\lambda_j-\frac{4}{9}(\lambda_j+\lambda_{j+1}+\lambda_{j+2})=0,\qquad 0\le j\le 2,\\
0<\sum_{k=0}^2\lambda_k^2<3,\\
 \lambda_j>0,\qquad  0\le j\le 2.
\end{cases}
\end{equation}
Since the function $x\mapsto -\frac{2}{3}x\log x+\frac{4}{3}x$ is strictly increasing on $(0,e)$, from the first line of \eqref{Eqns: Lagrange for n=6, (0,1,2,1,2,1), opposite equal} we conclude $\lambda_0=\lambda_1=\lambda_2$, and in particular $0<\lambda_j<1$. Substituting into the first line of \eqref{Eqns: Lagrange for n=6, (0,1,2,1,2,1), opposite equal} gives
\[
-\frac{2}{3}\lambda_j\log(\lambda_j)+\frac{4}{3}\lambda_j-\frac{4}{9}(\lambda_j+\lambda_{j+1}+\lambda_{j+2})=-\frac{2}{3}\lambda_j\log(\lambda_j)> 0.
\]
Hence no solution exists.
\end{proof}

\begin{lemma}\label{Lem: one opposite pair unequal, n=6}
The system \eqref{Eqns: Lagrange for n=6, (0,1,2,1,2,1)} has no solution in the region 
\[
  \{(\lambda,\nu)\in R^{6}_+\times R^{6}_+: \exists j_0\in\{0,1,2\}, \lambda_{j_0}\ne \lambda_{j_0+3}\}.
\]
\end{lemma}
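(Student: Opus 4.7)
The plan is a three-stage reduction: (i) show all $\lambda_j>0$ so that all $\nu_j=0$; (ii) reduce the system to pair-wise scalar equations via $\varphi(x):=x(1-\log x)$; (iii) case-analyze the number of unequal opposite pairs and derive one-variable inequalities that have no solution.

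For positivity, if $\lambda_j=0$ while $\lambda_{j+3}>0$, the $j$-th equation of \eqref{Eqns: Lagrange for n=6, (0,1,2,1,2,1)} gives $\nu_j=\tfrac{1}{3}\lambda_{j+3}-\tfrac{2}{9}S$ and the $(j{+}3)$-rd gives $\lambda_{j+3}(3-2\log\lambda_{j+3})=\tfrac{2}{3}S$; combined with $\nu_j\ge 0$, these force $\lambda_{j+3}\ge e$, contradicting $\lambda_{j+3}<\sqrt{6}<e$ (from $\|\lambda\|_2^2<6$). If $\lambda_j=\lambda_{j+3}=0$, the $j$-th equation forces $S=0$, contradicting $\|\lambda\|_2^2>0$. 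All $\nu_j$ thus vanish. Subtracting and adding the $j$-th and $(j{+}3)$-rd equations yields the equivalent pair-wise system $\varphi(\lambda_j)=\varphi(\lambda_{j+3})=:s_j$ and $2s_j+T_j=\tfrac{2}{3}S$, with $T_j:=\lambda_j+\lambda_{j+3}$ and $S:=T_0+T_1+T_2$. Since $\varphi$ is strictly unimodal on $(0,\sqrt{6})$ with peak at $1$, each pair is either \emph{equal} (with common value $a_j$ satisfying $h(a_j):=a_j(2-\log a_j)=S/3$) or \emph{unequal} (with $\{\lambda_j,\lambda_{j+3}\}=\{x_-(s_j),x_+(s_j)\}$ the two roots of $\varphi=s_j$, and $\sigma(s_j):=x_-(s_j)+x_+(s_j)$ satisfying $\sigma(s_j)+2s_j=\tfrac{2}{3}S$). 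Cauchy--Schwarz and $\|\lambda\|_2^2<6$ give $S<6$, so $S/3<2=h(1)$ and strict monotonicity of $h$ on $(0,e)$ forces every equal pair to share a common value $a<1$.

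Let $n_u\in\{1,2,3\}$ count the unequal pairs. When $n_u=3$, summing $\sigma(s_j)+2s_j=\tfrac{2}{3}S$ together with $S=\sigma(s_0)+\sigma(s_1)+\sigma(s_2)$ yields $\sum_{j=0}^{2}(\sigma(s_j)-2s_j)=0$. I will prove $\sigma(s)>2s$ strictly on $(0,1)$, giving a contradiction: first $\sigma(s)>2$ follows from the intermediate inequality $\varphi(2-x)>\varphi(x)$ for $x\in(0,1)$ (proved by noting $g(x):=\varphi(2-x)-\varphi(x)$ satisfies $g'(x)=\log(x(2-x))\le 0$ and $g(1)=0$), combined with strict monotonicity of $\varphi$ on $(1,\sqrt{6})$; then convexity of $x\log x$ together with the identity $\sigma(s)-2s=x_-\log x_-+x_+\log x_+$ gives $\sigma(s)-2s\ge\sigma(s)\log(\sigma(s)/2)>0$. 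When $n_u\in\{1,2\}$, cyclic symmetry lets us fix the location of the unequal pair(s); eliminating $S$ via $S=3h(a)$ reduces each subcase to a single-variable equation in $a$. For example, $n_u=1$ yields $\sigma(\mathcal{P}(a))=a(2-3\log a)$ with $\mathcal{P}(a):=a(1+\tfrac{1}{2}\log a)$ and $a\in(e^{-2},1)$; equivalently $\sigma(\mathcal{P}(a))-2\mathcal{P}(a)=-4a\log a$. The case $n_u=2$ yields a similar transcendental identity. These must be ruled out by strict inequalities.

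The main obstacle is in subcases $n_u\in\{1,2\}$: unlike $n_u=3$, the required strict inequalities involve $\sigma\circ\mathcal{P}$ (and a similar composition), both sides vanishing at the degenerate boundary $a=1$, which corresponds to the all-equal configuration already excluded by Lemma~\ref{Lem: pairwise-opposite-equality, n=6}. A careful expansion at $a=1$ (to order $\eta^2$ with $\eta=1-a$) shows the two sides separate positively at leading order; globalising this to $a\in(e^{-2},1)$ will likely require a monotonicity argument on the difference, or a Lambert-$W$ reparametrization of $x_\pm(s)$ that converts the transcendental identities into more tractable algebraic inequalities.
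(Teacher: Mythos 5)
Your reductions up to the final step are correct and essentially parallel the paper's: positivity of all $\lambda_j$ (hence $\nu\equiv 0$), the pairing identity $\varphi(\lambda_j)=\varphi(\lambda_{j+3})$ with $\varphi(x)=x(1-\log x)$ (the paper's $F=\tfrac23\varphi$), the fact that each unequal pair straddles $1$ and sums to more than $2$, and the fact that all equal pairs share a common value $a<1$. Your disposal of $n_u=3$ via $\sum_j(\sigma(s_j)-2s_j)=0$ together with $\sigma(s)>2>2s$ is valid (the paper instead uses $\lambda_j^2+\lambda_{j+3}^2>\tfrac12(\lambda_j+\lambda_{j+3})^2>2$ summed over three pairs; both work). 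Your reduction of $n_u=1$ to $\sigma(\mathcal{P}(a))-2\mathcal{P}(a)=-4a\log a$ with $\mathcal{P}(a)=a(1+\tfrac12\log a)$ is also correct as an identity.

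However, there is a genuine gap: the case $n_u=1$ is the entire analytic content of this lemma, and you do not prove it. You reduce it to a one-variable transcendental equation, observe that both sides vanish at the degenerate point $a=1$, and state that a second-order expansion "shows the two sides separate positively at leading order" and that globalizing "will likely require a monotonicity argument\dots or a Lambert-$W$ reparametrization." A local expansion at $a=1$ does not rule out solutions elsewhere in $(e^{-2},1)$, and no global argument is supplied. This is exactly the step the paper's Step~4 addresses: it parametrizes the unequal pair by $r=\lambda_{j_0+3}/\lambda_{j_0}>1$, solves explicitly $\lambda_{j_0}=e\,r^{-r/(r-1)}$, obtains a single function $h(r)$ that must vanish, and proves $h>0$ on $(1,\infty)$ by an eight-stage chain of auxiliary functions $h_1,\dots,h_8$ whose positivity is propagated backwards from $h_8''>0$. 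Without an argument of comparable force, your proof is incomplete. Separately, you leave $n_u=2$ as "a similar transcendental identity"; this case is in fact elementary and you should close it the way the paper does: the stationarity equation for the equal pair with value $a<1$ forces the sum of the remaining four entries to equal $-3a\log a+4a<4$, while two unequal pairs each sum to more than $2$, a contradiction. I recommend you adopt that shortcut for $n_u=2$ and supply a complete global monotonicity proof (or equivalent) for the $n_u=1$ equation before the argument can be considered a proof.
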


\begin{proof}
Assume $(\lambda,\nu)$ is a solution in the above region. We divide the proof into four steps.\par 
  \textbf{Step 1.} We show that $\lambda_j>0$ for all $j\in\{0,1,2,3,4,5\}$ and hence $\nu_j=0$ for all such $j$. Take $\ell\in \{0,1,2,3,4,5\}$.\par
\begin{itemize}
  \item \emph{Case 1: $\lambda_{\ell}=\lambda_{\ell+3}$.}
  Arguing by contradiction, assume that $\lambda_{\ell}=\lambda_{\ell+3}=0$ for some $\ell\in\{0,1,2\}$. 
  Then, from the first line in \eqref{Eqns: Lagrange for n=6, (0,1,2,1,2,1)} with index $\ell$, we obtain
  \[
    \frac{2}{9}\sum_{k\neq \ell,\ell+3}\lambda_k=-\nu_{\ell}\le 0.
  \]
Therefore, by $\lambda_j\geq 0$, we have $\lambda_j=0$ for all $j$, which contradicts $0<\sum_{k=0}^5\lambda_k^2$. Hence $\lambda_{\ell}>0$ and by $\lambda_{\ell}\nu_{\ell}=0$ in \eqref{Eqns: Lagrange for n=6, (0,1,2,1,2,1)} we have $\nu_{\ell}=0$ for such index $\ell$.\par 
    \item \emph{Case 2: $\lambda_{\ell}\neq \lambda_{\ell+3}$.} For $0\le j\le 2$, the first line of \eqref{Eqns: Lagrange for n=6, (0,1,2,1,2,1)} can be equivalently written as
\begin{equation}\label{eq:pair-form}
  -\frac{2}{3}\lambda_j\log(\lambda_j)+\frac{2}{3}\lambda_j-\nu_j
  =\frac{2}{9}\sum_{k=0}^5\lambda_k-\frac{1}{3}\lambda_j-\frac{1}{3}\lambda_{j+3},
  \qquad 0\leq j\leq 2.
\end{equation}
Let $F(x)=-\frac{2}{3}x\log x+\frac{2}{3}x$. 
From the symmetry between the equations for indices $j$ and $j+3$ in the right hand side of \eqref{eq:pair-form}, we obtain
\begin{equation}\label{eq: opposite F}
  F(\lambda_j)-\nu_j = F(\lambda_{j+3})-\nu_{j+3}.
\end{equation}
Assume that $0=\lambda_{\ell}<\lambda_{\ell+3}$. Then $F(\lambda_{\ell})=F(0)=0$. Moreover, since $\lambda_{\ell+3}>0$ and $\lambda_{\ell+3}\nu_{\ell+3}=0$ in \eqref{Eqns: Lagrange for n=6, (0,1,2,1,2,1)}, we have $\nu_{\ell+3}=0$. Hence \eqref{Eqns: Lagrange for n=6, (0,1,2,1,2,1)} and \eqref{eq: opposite F} yields
\[
  0\ge -\nu_{\ell}=F(\lambda_{\ell+3})
  = -\frac{2}{3}\lambda_{\ell+3}\log(\lambda_{\ell+3})+\frac{2}{3}\lambda_{\ell+3}.
\]
So $\lambda_{\ell+3}\ge e>\sqrt{6}$, contradicting $0<\sum_{k=0}^5\lambda_k^2<6$. Therefore, whenever $\lambda_{\ell}\ne\lambda_{\ell+3}$ we must have $\lambda_{\ell},\lambda_{\ell+3}>0$ and thus $\nu_{\ell}=\nu_{\ell+3}=0$.\par
 \end{itemize}\par 
 Therefore, no opposite pair can be $(0,0)$ and no opposite pair can have a zero/positive split. Hence all $\lambda_j>0$, and by $\lambda_j\nu_j=0$ in \eqref{Eqns: Lagrange for n=6, (0,1,2,1,2,1)} we have all $\nu_j=0$.\par 
\medskip
\textbf{Step 2.} We show that for all $j\in\{0,1,2,3,4,5\}$ with $\lambda_j\neq \lambda_{j+3}$, we have $(\lambda_j-1)(\lambda_{j+3}-1)<0$ and $\lambda_j+\lambda_{j+3}>2$.\par 

Without loss of generality, suppose $\lambda_{j}<\lambda_{j+3}$. The function $F$ strictly increases on $(0,1)$ and strictly decreases on $(1,\infty)$, so  from \eqref{eq: opposite F} with $\nu_{j}=\nu_{j+3}=0$ proved in Step 1, we get
\begin{equation}
   F(\lambda_{j}) = F(\lambda_{j+3}).
\end{equation}
And hence
\[
  \lambda_{j}<1<\lambda_{j+3}.
\]
Define $\Theta(x)\coloneqq F(x)-F(2-x)$. Then $\Theta(1)=0$ and $\Theta'(x)=-\frac{2}{3}\log\big(x(2-x)\big)>0$ on $(0,1)$, so $\Theta(x)<0$ on $(0,1)$. Applying this to $x=\lambda_{j}$ gives $F(\lambda_{j})<F(2-\lambda_{j})$. Since $F$ decreases on $(1,\infty)$ and $F(\lambda_{j+3})=F(\lambda_{j})$, we deduce
\begin{equation}\label{Ineq: opposite sum lower bound}
  \lambda_{j+3}>2-\lambda_{j}
  \quad\Rightarrow\quad
  \lambda_{j}+\lambda_{j+3}>2.
\end{equation}\par
\medskip
\textbf{Step 3.} We show that for $\ell,\ell'\in\{0,1,2\}\setminus\{j_0\}$, we have $\lambda_{\ell}=\lambda_{\ell+3}=\lambda_{\ell'}=\lambda_{\ell'+3}$.\par 
By the assumption $\lambda_{j_0}\neq \lambda_{j_0+3}$, we have $\lambda_{j_0}+\lambda_{j_0+3}>2$ from Step 2. Hence
\[
  \lambda_{j_0}^2+\lambda_{j_0+3}^2 \ge \frac{(\lambda_{j_0}+\lambda_{j_0+3})^2}{2} > 2.
\]
Consequently, if $\lambda_{j}\neq \lambda_{j+3}$ for all $0\leq j\le 2$, then $\sum_{k=0}^{5}\lambda_k^2>6$, contradicting $\sum_{k=0}^{5}\lambda_k^2<6$ in \eqref{Eqns: Lagrange for n=6, (0,1,2,1,2,1)}. Therefore, there exists at least one $\ell$ with $\lambda_{\ell}=\lambda_{\ell+3}$. Let $\ell$ be an index in $\{0,1,2\}\setminus\{j_0\}$ such that $\lambda_\ell=\lambda_{\ell+3}$ and $\lambda_\ell$ is minimal among all pairs $\{\lambda_j=\lambda_{j+3}\}$ with $j\in\{0,1,2\}\setminus\{j_0\}$. If $\lambda_\ell=\lambda_{\ell+3}\ge 1$, then $\sum_{k=0}^{5}\lambda_k^2>6$, which again contradicts $\sum_{k=0}^{5}\lambda_k^2<6$. So there at least one $\ell$ with $\lambda_{\ell}=\lambda_{\ell+3}<1$. Denote by $\ell'$ the remaining index in $\{0,1,2\}\setminus\{j_0,\ell\}$. \par 

 Since $x\mapsto-3x\log x+4x$ is strictly increasing on $(0,1)$ and $\lambda_{\ell}<1$, from the first line of \eqref{Eqns: Lagrange for n=6, (0,1,2,1,2,1)} with index $\ell$ we get
\begin{equation}\label{Ineq: two opposites sum upper bound}
  \lambda_{j_0}+\lambda_{j_0+3}+\lambda_{\ell'}+\lambda_{\ell'+3}
  = -3\lambda_\ell\log(\lambda_\ell)+4\lambda_\ell<4.
\end{equation}
If $\lambda_{\ell'}\neq \lambda_{\ell'+3}$, then by \eqref{Ineq: opposite sum lower bound} also $\lambda_{\ell'}+\lambda_{\ell'+3}>2$, and together with  $\lambda_{j_0}+\lambda_{j_0+3}>2$ we get $\lambda_{j_0}+\lambda_{j_0+3}+\lambda_{\ell'}+\lambda_{\ell'+3}>4$, contradicting \eqref{Ineq: two opposites sum upper bound}. Hence $\lambda_{\ell'}=\lambda_{\ell'+3}<1$. We compare the equations for $j=\ell$ and $j=\ell'$ in the first line of \eqref{Eqns: Lagrange for n=6, (0,1,2,1,2,1)}:
\[
\begin{cases}
-\tfrac{2}{3}\lambda_{\ell}\log(\lambda_{\ell})+\tfrac{8}{9}\lambda_{\ell}
  -\tfrac{2}{9}(\lambda_{j_0}+2\lambda_{\ell'}+\lambda_{j_0+3})=0,\\
-\tfrac{2}{3}\lambda_{\ell'}\log(\lambda_{\ell'})+\tfrac{8}{9}\lambda_{\ell'}-\tfrac{2}{9}(2\lambda_{\ell}+\lambda_{j_0}+\lambda_{j_0+3})=0.
\end{cases}
\]
Subtracting and using that $x\mapsto -\tfrac{2}{3}x\log x+\tfrac{4}{3}x$ is strictly increasing on $(0,\sqrt{6})$ yields
\[
  \lambda_{\ell'}=\lambda_{\ell}.
\]
Therefore, together with Step 2, $\lambda_{\ell}=\lambda_{\ell+3}$ and $\lambda_{\ell'}=\lambda_{\ell'+3}$, we have 
\begin{equation}
  (\lambda_{j_0}-1)(\lambda_{j_0+3}-1)<0,\qquad \lambda_{\ell+3}=\lambda_{\ell}=\lambda_{\ell'}=\lambda_{\ell'+3}.
\end{equation}\par
\medskip
\textbf{Step 4.} We show that if $\lambda_{j_0}\neq \lambda_{j_0+3}$ and $\lambda_{\ell}=\lambda_{\ell+3}=\lambda_{\ell'}=\lambda_{\ell'+3}$, where the indices $\{j_0,\ell,\ell'\}$ are as in Step 3, then the system \eqref{Eqns: Lagrange for n=6, (0,1,2,1,2,1)} has no solution.\par 

Without loss of generality, assuming $\lambda_{j_0}<\lambda_{j_0+3}$ and $\lambda_{\ell}=\lambda_{\ell+3}=\lambda_{\ell'}=\lambda_{\ell'+3}$, the system \eqref{Eqns: Lagrange for n=6, (0,1,2,1,2,1)} reduces to
\begin{equation}
  \begin{cases}
\mathcal{L}_{j_0}\coloneqq-\tfrac{2}{3}\lambda_{j_0}\log(\lambda_{j_0})+\tfrac{7}{9}\lambda_{j_0}+\tfrac{1}{9}\lambda_{j_0+3}-\tfrac{8}{9}\lambda_{\ell}=0,\\
\mathcal{L}_{j_0+3}\coloneqq-\tfrac{2}{3}\lambda_{j_0+3}\log(\lambda_{j_0+3})+\tfrac{7}{9}\lambda_{j_0+3}+\tfrac{1}{9}\lambda_{j_0}-\tfrac{8}{9}\lambda_{\ell}=0,\\
\mathcal{L}_{\ell}\coloneqq-\tfrac{2}{3}\lambda_{\ell}\log(\lambda_{\ell})+\tfrac{4}{9}\lambda_{\ell}-\tfrac{2}{9}(\lambda_{j_0}+\lambda_{j_0+3})=0,\\
0<4\lambda_{\ell}^2+\lambda_{j_0}^2+\lambda_{j_0+3}^2<6,\\
 \lambda_j>0,\qquad j=j_0,j_0+3,\ell.
\end{cases}
\end{equation}

Let $\lambda_{j_0+3}=r\lambda_{j_0}$ with $r>1$. By solving the equation $\mathcal{L}_{j_0}=\mathcal{L}_{j_0+3}$ we get
\[
  \lambda_{j_0}=er^{-\frac{r}{r-1}},\qquad \lambda_{j_0+3}=r\lambda_{j_0}=er^{-\frac{1}{r-1}}.
\]
Moreover,
\[
  0=\mathcal{L}_{j_0}+\mathcal{L}_{j_0+3}+4\mathcal{L}_{\ell}
  =-\tfrac{2}{3}\lambda_{j_0}\log(\lambda_{j_0})-\tfrac{2}{3}\lambda_{j_0+3}\log(\lambda_{j_0+3})-\tfrac{8}{3}\lambda_{\ell}\log(\lambda_{\ell}),
\]
so
\[
  \lambda_{\ell}=-\frac{\lambda_{j_0}\log(\lambda_{j_0})+\lambda_{j_0+3}\log(\lambda_{j_0+3})}{4\log(\lambda_{\ell})}.
\]
On the other hand, $\mathcal{L}_{\ell}=0$ gives
\[
  \lambda_{\ell}=\frac{-\lambda_{j_0}-\lambda_{j_0+3}}{-2+3\log(\lambda_{\ell})}.
\]
Combining the two expressions for $\lambda_{\ell}$ yields
\begin{equation}\label{Eqn: First equation of log(lambda_0)}
  \log(\lambda_{\ell})
  =-\frac{2(\lambda_{j_0}\log(\lambda_{j_0})+\lambda_{j_0+3}\log(\lambda_{j_0+3}))}
          {4\lambda_{j_0}+4\lambda_{j_0+3}-3\lambda_{j_0}\log(\lambda_{j_0})-3\lambda_{j_0+3}\log(\lambda_{j_0+3})}
  =\frac{-2 r^2+4 r \log r+2}{r^2+6 r \log r-1}.
\end{equation}
Using $\mathcal{L}_{j_0}=0$ with $\lambda_{j_0+3}=r\lambda_{j_0}$, we also obtain
\[
  \lambda_{\ell}=\frac{9}{8}\left(-\frac{2}{3}\lambda_{j_0}\log(\lambda_{j_0})+\frac{7}{9}\lambda_{j_0}+\frac{1}{9}\lambda_{4}\right)
  =\frac{e r^{-\frac{r}{r-1}}\left(r^2+6 r \log r-1\right)}{8 (r-1)},
\]
hence
\begin{equation}\label{Eqn: Second equation of log(lambda_0)}
  \log(\lambda_{\ell})
  =\log \left(r^2+6 r \log r-1\right)-\log \big(8 (r-1)\big)-\frac{r \log r}{r-1}+1.
\end{equation}
Set
\[
  h(x)=\log(x^2+6 x \log x-1)-\log\big(8 (x-1)\big)-\frac{x \log x}{x-1}+1
        -\frac{-2 x^2+4 x \log x+2}{x^2+6 x \log x-1},\qquad x\in \mathbb{R}_+,
\]
where the values of $h$ and $h'$ at $x=1$ are understood by continuous extension $h(1)=h'(1)=0$. From \eqref{Eqn: First equation of log(lambda_0)} and \eqref{Eqn: Second equation of log(lambda_0)} we have $h(r)=0$. \par 

We will show that $h(r)>0$ by monotonicity of $h$. To this end, we introduce the auxiliary functions
\[
  \begin{aligned}
    h_1(x)&=(x-1)^2 \big(x^2+6 x \log x-1\big)^2 h'(x)\\
          &=36 x^2 \log ^3(x)-24 x \left(x^2-1\right) \log ^2(x)+\left(11 x^2+14 x+11\right) (x-1)^2 \log (x)-12 (x+1) (x-1)^3,\\
    h_2(x)&=xh_1'(x)\\
          &=-(x-1)^2 \left(37 x^2+10 x-11\right)+72 x^2 \log ^3(x)-12 x \left(6 x^2-9 x-2\right) \log ^2(x)\\
          &\quad+4 x \left(11 x^3-18 x^2-3 x+10\right) \log (x),\\
    h_3(x)&=xh_2''(x)\\
          &=8 \left(-17 x^3-15 x^2+6 \left(11 x^3-24 x^2+22 x+1\right) \log (x)+21 x+18 x \log ^3(x)-54 (x-2) x \log ^2(x)+11\right),\\
    h_4(x)&=xh_3'(x)=24 \left(5 x^3-58 x^2+2 \left(33 x^2-66 x+58\right) x \log (x)+51 x+6 x \log ^3(x)+18 (3-2 x) x \log ^2(x)+2\right),\\
    h_5(x)&=xh_4''(x)=48 \left(4 \left(45 x^2-73 x+28\right)+6 \left(33 x^2-40 x+12\right) \log (x)+(9-36 x) \log ^2(x)\right),\\
    h_6(x)&=xh_5'(x)=96 \left(279 x^2+3 \left(66 x^2-52 x+3\right) \log (x)-266 x-18 x \log ^2(x)+36\right),\\
    h_7(x)&=xh_6'(x)=96 \left(756 x^2-422 x-18 x \log ^2(x)+12 (33 x-16) x \log (x)+9\right),\\
    h_8(x)&=xh_7''(x)=1152 (225 x+(66 x-3) \log (x)-19).
  \end{aligned}
\]
Direct calculations yield the following values at $x=1$:
\[
\begin{aligned}
&h(1)=0,\quad h'(1)=0,\quad h_1'(1)=0,\quad h_2'(1)=0,\quad h_2''(1)=0,\quad h_3'(1)=0,\\
&h_4'(1)=0,\quad h_4''(1)=0,\quad h_5'(1)>0,\quad h_6'(1)>0,\quad h_7'(1)>0,\quad h_7''(1)>0,\quad h_8'(1)>0,
\end{aligned}
\]
and, for all $x\ge 1$,
\[
  h_8''(x)=\frac{3456 (22 x+1)}{x^2}>0.
\]
It then follows that $h'_8$ is strictly increasing on $[1, +\infty)$, so positive on the same interval. Consequently, by $h_8(1)=h_7''(1)>0$, $h_8$ is positive on $[1, +\infty)$.  We thus deduce that $h''_7$ is positive on $[1, +\infty)$, so is $h_7$. Repeating this reasoning backward, we finally prove that $h$ is strictly increasing on $[1, +\infty)$, thus $h(x)>h(1)=0$ for $x>1$, in particular, $h(r)>0$, which contradicts $h(r)=0$. This contradicts $\lambda_{j_0}<\lambda_{j_0+3}$, completing the proof.
\end{proof}

\begin{proof}[Proof of Theorem~\ref{Thm: Log Sobolev inequality n=6}]
By Lemma~\ref{Lem: pairwise-opposite-equality, n=6} and Lemma~\ref{Lem: one opposite pair unequal, n=6}, the system \eqref{Eqns: Lagrange for n=6, (0,1,2,1,2,1)} has no solution. Hence, by Lemma~\ref{Lem: KKT condition absorbing mu}, we conclude that $f_{\phi_6}\ge 0$ on $\mathbb{R}_+^6$. This completes the proof of Theorem~\ref{Thm: Log Sobolev inequality n=6}.
\end{proof}

\begin{proof}[Sketch proof of Theorem~\ref{Thm: Log Sobolev inequality n=4}]
Set
\[
  \Phi(4)=\frac{1}{4}F_4\diag(\phi_4)F_4^{-1}
  =\begin{pmatrix}[1.2]
    \frac{9}{40} & -\frac{1}{10} & -\frac{1}{40} & -\frac{1}{10} \\
    -\frac{1}{10} & \frac{9}{40} & -\frac{1}{10} & -\frac{1}{40} \\
    -\frac{1}{40} & -\frac{1}{10} & \frac{9}{40} & -\frac{1}{10} \\
    -\frac{1}{10} & -\frac{1}{40} & -\frac{1}{10} & \frac{9}{40}
  \end{pmatrix},\qquad f_{\phi_4}(\lambda)
  = 2\inner{\lambda,\Phi(4)\lambda}-\mathrm{H}_4[\lambda].
\]
By Lemma~\ref{Lem: KKT condition absorbing mu}, to show $f_{\phi_4}\ge 0$ on $\mathbb{R}_+^4$, it suffices to show that the following system (indices modulo $4$) has no solution:
\begin{equation}\label{Eqns: Lagrange for n=4, (0,1,8/5,1)}
\begin{cases}
-\lambda_j\log(\lambda_j)
  +\Big(\tfrac{9}{10}\lambda_j-\tfrac{2}{5}(\lambda_{j+1}+\lambda_{j-1})-\tfrac{1}{10}\lambda_{j+2}\Big)
  -\nu_j=0,\qquad 0\le j\le 3,\\
0<\sum_{k=0}^3 \lambda_k^2<4,\\
\lambda_j\geq 0,\qquad 0\le j\le 3,\\
\lambda_j\nu_j=0,\qquad 0\le j\le 3,\\
\nu_j\ge 0,\qquad 0\le j\le 3.
\end{cases}
\end{equation}

As in the case $n=6$, we analyze \eqref{Eqns: Lagrange for n=4, (0,1,8/5,1)} in two complementary regimes:
\[
  \text{(i) } \lambda_j=\lambda_{j+2}\ \text{ for } j=0,1,
  \qquad
  \text{(ii) } \lambda_{j_0}\ne \lambda_{j_0+2}\ \text{ for some } j_0\in\{0,1\}.
\]
\begin{lemma}\label{Lem: pairwise-opposite-equality, n=4}
  The system \eqref{Eqns: Lagrange for n=4, (0,1,8/5,1)} has no solution in the region 
\[
  \{(\lambda,\nu)\in R^{4}_+\times R^{4}_+: \forall j\in \{0,1\},\lambda_{j}=\lambda_{j+2}\}.
\]
\end{lemma}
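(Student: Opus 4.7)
The plan is to adapt the strategy of Lemma~\ref{Lem: pairwise-opposite-equality, n=6}, exploiting the $n=4$ symmetry $\lambda_0=\lambda_2$, $\lambda_1=\lambda_3$ to collapse the KKT system \eqref{Eqns: Lagrange for n=4, (0,1,8/5,1)} to two scalar equations in $(\lambda_0,\lambda_1)$. Under this symmetry the equations for $j=0$ and $j=2$ (resp.\ $j=1$ and $j=3$) coincide, forcing $\nu_0=\nu_2$ and $\nu_1=\nu_3$, and the system reduces to
\[
-\lambda_0\log\lambda_0+\tfrac{4}{5}(\lambda_0-\lambda_1)=\nu_0,\qquad -\lambda_1\log\lambda_1+\tfrac{4}{5}(\lambda_1-\lambda_0)=\nu_1,
\]
together with $\lambda_j\ge 0$, $\lambda_j\nu_j=0$, $\nu_j\ge 0$ for $j\in\{0,1\}$, and the rescaled norm constraint $0<\lambda_0^2+\lambda_1^2<2$.

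First I would rule out boundary points. If $\lambda_0=0$, the first equation gives $-\tfrac{4}{5}\lambda_1=\nu_0\ge 0$, forcing $\lambda_1=0$ and contradicting $\sum_k\lambda_k^2>0$; the case $\lambda_1=0$ is symmetric. Hence $\lambda_0,\lambda_1>0$, so complementary slackness yields $\nu_0=\nu_1=0$, leaving
\[
\lambda_0\log\lambda_0=\tfrac{4}{5}(\lambda_0-\lambda_1),\qquad \lambda_1\log\lambda_1=\tfrac{4}{5}(\lambda_1-\lambda_0).
\]

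The next step is a case split. If $\lambda_0=\lambda_1$, each equation reduces to $\lambda_0\log\lambda_0=0$, so $\lambda_0=\lambda_1=1$ and $\sum_{k=0}^{3}\lambda_k^2=4$, contradicting the strict bound. Otherwise, subtracting the two equations yields
\[
g(\lambda_1)-g(\lambda_0)=\tfrac{8}{5}(\lambda_1-\lambda_0),\qquad g(x)\coloneqq x\log x.
\]
The mean value theorem applied to $g$, together with $g'(x)=1+\log x$, produces some $\xi$ strictly between $\lambda_0$ and $\lambda_1$ satisfying $1+\log\xi=\tfrac{8}{5}$, i.e.\ $\xi=e^{3/5}$. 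Hence $\max(\lambda_0,\lambda_1)>e^{3/5}$, and therefore
\[
\sum_{k=0}^{3}\lambda_k^2=2(\lambda_0^2+\lambda_1^2)\ge 2\max(\lambda_0,\lambda_1)^2>2e^{6/5}>4,
\]
which contradicts the norm constraint.

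The argument is markedly simpler than its $n=6$ counterpart: the two-term symmetry leaves only two free variables, sparing us the multi-step monotonicity analysis of higher derivatives seen in Step 4 of Lemma~\ref{Lem: one opposite pair unequal, n=6}. The main obstacle is essentially just the book-keeping in the reduction and the modest numerical check $e^{6/5}>2$, which is immediate.
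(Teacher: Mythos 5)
Your proposal is correct and follows essentially the same route as the paper's (sketched) argument: reduce via the symmetry $\lambda_j=\lambda_{j+2}$ to a two-variable system, rule out zero coordinates to kill the multipliers, and then use monotonicity — your mean-value-theorem step showing that $\lambda_0\neq\lambda_1$ forces $\max(\lambda_0,\lambda_1)>e^{3/5}>\sqrt{2}$ is just a repackaging of the paper's observation that $x\mapsto -x\log x+\tfrac{8}{5}x$ is strictly increasing on $(0,\sqrt{2})$, which forces $\lambda_0=\lambda_1$ and then the contradiction with the norm constraint. All the computations check out.
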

\begin{lemma}\label{Lem: one opposite pair unequal, n=4}
The system \eqref{Eqns: Lagrange for n=4, (0,1,8/5,1)} has no solution in the region 
\[
  \{(\lambda,\nu)\in R^{4}_+\times R^{4}_+: \exists j_0\in \{0,1\},\lambda_{j_0}\ne \lambda_{j_0+2}\}.
\]
\end{lemma}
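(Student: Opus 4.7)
The plan is to mimic the four-step strategy used in the proof of Lemma~\ref{Lem: one opposite pair unequal, n=6}, taking advantage of the fact that $\mathbb{Z}_4$ has only two opposite pairs $(\lambda_0,\lambda_2)$ and $(\lambda_1,\lambda_3)$. Setting $F(x)=-x\log x+x$, the first line of \eqref{Eqns: Lagrange for n=4, (0,1,8/5,1)} can be rewritten in the pair-symmetric form
\[
  F(\lambda_j)-\nu_j \;=\; \tfrac{1}{10}(\lambda_j+\lambda_{j+2})+\tfrac{2}{5}(\lambda_{j+1}+\lambda_{j-1}),\qquad 0\le j\le 3,
\]
whose right-hand side is invariant under $j\leftrightarrow j+2$. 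Hence $F(\lambda_j)-\nu_j=F(\lambda_{j+2})-\nu_{j+2}$, the direct analog of \eqref{eq: opposite F}.

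Steps 1--3 then follow along the same lines as in Lemma~\ref{Lem: one opposite pair unequal, n=6}. In Step 1, an opposite pair equal to $(0,0)$ would force all $\lambda_j=0$ via the corresponding equation, contradicting $\norm{\lambda}_2^2>0$; a mixed pair $\lambda_\ell=0<\lambda_{\ell+2}$ forces $-\nu_\ell=F(\lambda_{\ell+2})\le 0$, hence $\lambda_{\ell+2}\ge e>2$, contradicting $\norm{\lambda}_2^2<4$. Thus all $\lambda_j>0$, and consequently all $\nu_j=0$. In Step 2, unimodality of $F$ (maximum at $x=1$), together with the relation $F(\lambda_j)=F(\lambda_{j+2})$ and the auxiliary function $\Theta(x)=F(x)-F(2-x)$ whose derivative is $-\log(x(2-x))>0$ on $(0,2)\setminus\{1\}$, shows that $\lambda_j+\lambda_{j+2}>2$ whenever $\lambda_j\ne\lambda_{j+2}$. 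In Step 3, if both pairs were unequal, each would contribute $\lambda_j^2+\lambda_{j+2}^2\ge \tfrac{1}{2}(\lambda_j+\lambda_{j+2})^2>2$, so $\norm{\lambda}_2^2>4$; hence we may assume $\lambda_1=\lambda_3=:a$ and $\lambda_0\ne\lambda_2$, and the same bound gives $a<1$.

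Step 4 produces the contradiction. Set $\lambda_2=r\lambda_0$ with $r>1$; then $F(\lambda_0)=F(\lambda_2)$ yields exactly the parametrization from the $\mathbb{Z}_6$ case:
\[
  \lambda_0=e\,r^{-r/(r-1)},\qquad \lambda_2=e\,r^{-1/(r-1)}.
\]
The combination $\mathcal{L}_0+\mathcal{L}_2=0$ then expresses
\[
  a \;=\; \tfrac{1}{2}(\lambda_0+\lambda_2)-\tfrac{5}{8}\bigl(\lambda_0\log\lambda_0+\lambda_2\log\lambda_2\bigr),
\]
giving $a$ as an explicit function of $r$. Substituting into the remaining independent equation $\mathcal{L}_1=0$, namely $-a\log a+\tfrac{4}{5}a-\tfrac{2}{5}(\lambda_0+\lambda_2)=0$, produces a single scalar equation $h(r)=0$, and one checks that $h(1)=0$ by continuous extension. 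The main obstacle is to verify $h(r)\ne 0$ for $r>1$, and I would handle it by the derivative-cascade recipe used at the end of the proof of Lemma~\ref{Lem: one opposite pair unequal, n=6}: after clearing denominators and iteratively multiplying by suitable positive factors of $r$, produce a chain $h_1,h_2,\ldots,h_k$ of auxiliary functions vanishing to successively lower order at $r=1$, verify that some $h_k$ (or a further derivative thereof) is manifestly positive on $[1,\infty)$, and propagate positivity upward. This yields $h(r)>h(1)=0$ for $r>1$, contradicting $h(r)=0$. The only nontrivial work is the explicit bookkeeping of these auxiliary functions; the qualitative strategy is strictly parallel to, and somewhat lighter than, the $\mathbb{Z}_6$ case, reflecting the fact that only one scalar parameter $r$ needs to be ruled out.
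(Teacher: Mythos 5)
Your proposal follows the paper's route essentially verbatim: the same positivity/complementary-slackness argument, the same unimodality of $F$ giving $\lambda_{j_0}+\lambda_{j_0+2}>2$, the same norm-constraint argument forcing the other pair equal and $<1$, the substitution $\lambda_{j_0+2}=r\lambda_{j_0}$ yielding $\lambda_{j_0}=e\,r^{-r/(r-1)}$, and the same derivative-cascade strategy for the resulting scalar equation $h(r)=0$. The only (immaterial) difference is that you eliminate the middle variable by solving $\mathcal{L}_0+\mathcal{L}_2=0$ for $a$ explicitly rather than equating two expressions for $\log a$, and, like the paper's own sketch, you leave the explicit chain of auxiliary functions $h_1,\ldots,h_8$ unverified.
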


The proofs of these lemmas follow the same stationary-condition arguments used for $n=6$, so we only give a sketch of proof: we first shows verbatim that any solution of \eqref{Eqns: Lagrange for n=4, (0,1,8/5,1)} satisfies $\lambda_j>0$ and $\nu_j=0$ for all $j$. In case (i), the symmetry $\lambda_j=\lambda_{j+2}$ reduces \eqref{Eqns: Lagrange for n=4, (0,1,8/5,1)} to
\[
\begin{cases}
-\lambda_j\log(\lambda_j)+\tfrac{8}{5}\lambda_j-\tfrac{4}{5}(\lambda_j+\lambda_{j+1})=0,\quad j=0,1,\\
0<\lambda_0^2+\lambda_1^2<2,\\
\lambda_0,\lambda_1>0,
\end{cases}
\]
and the strict monotonicity of $x\mapsto -x\log x+\tfrac{8}{5}x$ on $(0,\sqrt{2})$ forces $\lambda_0=\lambda_1<1$ as in the proof of Lemma \ref{Lem: pairwise-opposite-equality, n=6}, which contradicts the first equality. In case (ii), by the same analysis used in Lemma \ref{Lem: one opposite pair unequal, n=6}, we have $\lambda_{j_0}^2+\lambda_{j_0+2}^2>2$ and $\lambda_{\ell}=\lambda_{\ell+2}<1$. The system becomes
\[
\begin{cases}
-\lambda_{j_0}\log(\lambda_{j_0})+\tfrac{9}{10}\lambda_{j_0}-\tfrac{1}{10}\lambda_{j_0+2}-\tfrac{4}{5}\lambda_{\ell}=0,\\
-\lambda_{j_0+2}\log(\lambda_{j_0+2})+\tfrac{9}{10}\lambda_{j_0+2}-\tfrac{1}{10}\lambda_{j_0}-\tfrac{4}{5}\lambda_{\ell}=0,\\
-\lambda_{\ell}\log(\lambda_{\ell})+\tfrac{4}{5}\lambda_{\ell}-\tfrac{2}{5}(\lambda_{j_0}+\lambda_{j_0+2})=0,\\
0<2\lambda_{\ell}^2+\lambda_{j_0}^2+\lambda_{j_0+2}^2<4,\\
\lambda_{j_0},\lambda_{j_0+2},\lambda_{\ell}>0.
\end{cases}
\]
Write $\lambda_{j_0+2}=r\lambda_{j_0}$ with $r>1$. From the above system, eliminating $\log(\lambda_\ell)$ like Step 3 in the proof of Lemma \ref{Lem: one opposite pair unequal, n=6} yields the scalar identity $h(r)=0$ with
\[
  h(x)=\log(4 x \log x-\tfrac{2}{5} (x^2-1))+\frac{-4 x^2+8 x \log x+4}{x^2-10 x \log x-1}
       -\log(\tfrac{16 (x-1)}{5})-\frac{x \log x}{x-1}+1,\qquad x\in \mathbb{R}_+,
\]
where the values of $h$ and $h'$ at $x=1$ are understood by continuous extension $h(1)=h'(1)=0$. As in Step 4 in the proof of Lemma~\ref{Lem: one opposite pair unequal, n=6}, we introduce the auxiliary functions
\[
\begin{aligned}
& h_1(x)=(x-1)^2 (4 x \log (x)-\tfrac{2}{5}(x^2-1))^2h'(x),\quad h_2(x)=xh_1'(x),\quad h_3(x)=xh_2'(x),\quad h_3(x)=xh_2''(x),\\
&h_4(x)=xh_3'(x),\quad h_5(x)=xh_4''(x),\quad h_6(x)=xh_5'(x),\quad h_7(x)=x^2h_6''(x),\quad h_8(x)=xh_7''(x),
\end{aligned}
\]
and $h_8$ is positive on $[1, +\infty)$, so inductively we can derive $h'(x)>0$ for all $x>1$, hence $h(x)>0$ for $x>1$, a contradiction. Therefore, \eqref{Eqns: Lagrange for n=4, (0,1,8/5,1)} has no solution in either regime. By Lemma~\ref{Lem: KKT condition absorbing mu}, we conclude that $f_{\phi_4}\ge 0$ on $\mathbb{R}_+^4$, completing the proof of Theorem~\ref{Thm: Log Sobolev inequality n=4}.
\end{proof}

\section{Dyadic induction to optimal LSI on $\mathbb{Z}_{6\cdot 2^k}$ and $\mathbb{Z}_{8\cdot 2^k}$}\label{Sec: Induction from n to 2n}
To pass from the $n$--LSI to the $2n$--LSI, we compare the Dirichlet forms associated with the symbols $\gamma_n$ and $\gamma_{2n}$ via the Cooley--Tukey factorization of the $2n$-point DFT into two $n$-point DFTs \cite{MR178586}. We start with the following proposition, which provides the key comparison between the $n$-- and $2n$--level Dirichlet forms under appropriate compatibility assumptions on $\gamma_n$ and $\gamma_{2n}$.\par

\begin{proposition}\label{Prop: Compare Dirichlet form for n and 2n}
Let $n\ge 3$ be an integer. Let $\gamma_n:\mathbb{Z}_n\to\mathbb{R}_+$ be a function with associated matrix
\[
  \Gamma(n)=\frac{1}{n}F_n\diag(\gamma_n)F_n^{-1}\in M_n(\mathbb{C}).
\]
For $\lambda=(a_0,b_0,\ldots,a_{n-1},b_{n-1})\in\mathbb{R}_+^{2n}$ set $a=(a_0,\ldots,a_{n-1})\in\mathbb{R}_+^n$ and $b=(b_0,\ldots,b_{n-1})\in\mathbb{R}_+^n$. Assume $\gamma_n$ and $\gamma_{2n}$ satisfy
\begin{equation}\label{Cond: gamma pair condition}
\begin{cases}
\gamma_{n}(0)=\gamma_{2n}(0)=0,\\
\gamma_{n}(k)=\gamma_{n}(n-k),\qquad 1\le k\le n-1,\\
\gamma_{2n}(k)=\gamma_{2n}(2n-k),\qquad 1\le k\le 2n-1,\\
\gamma_{2n}(k)\ge \gamma_n(k),\qquad 1\le k\le [\frac{n-1}{2}],\\
\gamma_{2n}(n-k)-\gamma_{2n}(k)-1\ge 0,\qquad 0\le k\le [\frac{n-1}{2}].
\end{cases}
\end{equation}
Then the following inequality holds:
\begin{equation}\label{Ineq: Compare Dirichlet form for n and 2n}
  \inner{a,\Gamma(n)a}+\inner{b,\Gamma(n)b}+\frac{1}{2n}\big(\norm{a}_2-\norm{b}_2\big)^2 \le 2\inner{\lambda,\Gamma(2n)\lambda},
\end{equation}
provided the following inequality holds for all $x\ge 0$ and $0\le r_a,r_b\le 1$:
\begin{equation}\label{Ineq: Quadratic inequality from n to 2n}
 \begin{cases}
  \bigl(2\gamma_{2n}(\tfrac{n}{2})-2\gamma_n(\tfrac{n}{2})-1\bigr)\bigl(r_a+r_b x^2\bigr)
  +\gamma_{2n}(n)(1-x)^2-(1+x^2)+2x\sqrt{(1+r_a)(1+r_b)} \ge 0,& \text{if $n$ is even},\\
  \gamma_{2n}(n)(1-x)^2-(1+x^2)+2x \ge 0,& \text{if $n$ is odd}.
 \end{cases}
\end{equation}
\end{proposition}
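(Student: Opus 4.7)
The plan is to use the Cooley--Tukey factorization of the $2n$-point DFT to express $\inner{\lambda,\Gamma(2n)\lambda}$ in terms of the $n$-point Fourier coefficients of $a$ and $b$, and then to reduce \eqref{Ineq: Compare Dirichlet form for n and 2n} to the quadratic inequality \eqref{Ineq: Quadratic inequality from n to 2n} via a symmetry-based decomposition combined with one Cauchy--Schwarz bound.

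First I write $u = F_n^{-1} a$ and $v = F_n^{-1} b$ for the $n$-point Fourier coefficients, and apply Cooley--Tukey to obtain the $2n$-point Fourier coefficients of $\lambda$:
\[
  c_k \coloneqq \tfrac{1}{2}(u_k + \omega_{2n}^{-k} v_k), \qquad d_k \coloneqq \tfrac{1}{2}(u_k - \omega_{2n}^{-k} v_k),\qquad 0\le k\le n-1,
\]
as the $k$-th and $(n+k)$-th modes; thus $|c_k|^2 + |d_k|^2 = \tfrac{1}{2}(|u_k|^2 + |v_k|^2)$. Setting $\alpha_k = \gamma_{2n}(k) - \gamma_n(k)$, $\beta_k = \gamma_{2n}(n-k) - \gamma_n(k)$, $S = \sum_k|u_k|^2 = \|a\|_2^2/n$, $T = \sum_k|v_k|^2 = \|b\|_2^2/n$, and using the symmetries in \eqref{Cond: gamma pair condition} together with Parseval, the target \eqref{Ineq: Compare Dirichlet form for n and 2n} becomes
\[
  \sum_{k=0}^{n-1}\bigl[(\alpha_k-\tfrac{1}{2})|c_k|^2 + (\beta_k-\tfrac{1}{2})|d_k|^2\bigr] + \tfrac{1}{2}\sqrt{ST}\ge 0,
\]
where $\alpha_k\ge 0$ and $\beta_k-\alpha_k\ge 1$ for $1\le k\le\lfloor(n-1)/2\rfloor$ by \eqref{Cond: gamma pair condition}.

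Next I exploit the conjugate symmetry $c_{n-k} = \overline{d_k}$ (from $a,b\in\mathbb{R}^n$) and the relation $\alpha_{n-k} = \beta_k$ (from $\gamma_n(n-k) = \gamma_n(k)$) to pair $k$ with $n-k$, isolating the $k=0$ term, the $k=n/2$ term (if $n$ is even), and a double-counted interior sum over $1\le k<n/2$. In parallel, I split $S = S_{\mathrm{bd}} + S_{\mathrm{int}}$ with $S_{\mathrm{bd}} = u_0^2 + [u_{n/2}^2]_{n\text{ even}}$ and $S_{\mathrm{int}} = 2 E_a$, $E_a = \sum_{1\le k<n/2}|u_k|^2$ (and analogously $T_{\mathrm{bd}},T_{\mathrm{int}},E_b$), so Cauchy--Schwarz yields $\sqrt{ST}\ge\sqrt{S_{\mathrm{bd}}T_{\mathrm{bd}}} + 2\sqrt{E_aE_b}$. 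For each interior $k$, the decomposition
\[
  (2\alpha_k-1)|c_k|^2 + (2\beta_k-1)|d_k|^2 = (\alpha_k+\beta_k-1)(|c_k|^2+|d_k|^2) + (\alpha_k-\beta_k)(|c_k|^2-|d_k|^2),
\]
together with $|c_k|^2+|d_k|^2\ge|u_k||v_k|$ (AM--GM), $\bigl||c_k|^2-|d_k|^2\bigr|=|\operatorname{Re}(\omega_{2n}^k u_k\bar v_k)|\le|u_k||v_k|$, and $\beta_k-\alpha_k\ge 1$, gives the lower bound $(2\alpha_k-1)|u_k||v_k|$. Summing and adding the Cauchy--Schwarz estimate $\sqrt{E_aE_b}\ge\sum_{1\le k<n/2}|u_k||v_k|$, the interior contribution is bounded below by $\sum_{1\le k<n/2}2\alpha_k|u_k||v_k|\ge 0$.

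It remains to handle the boundary part. Using $c_0 = (u_0+v_0)/2$, $d_0 = (u_0-v_0)/2$, and $|c_{n/2}|^2 = |d_{n/2}|^2 = (u_{n/2}^2 + v_{n/2}^2)/4$, I introduce the parameters $x = v_0/u_0\ge 0$, $r_a = u_{n/2}^2/u_0^2\in[0,1]$, $r_b = v_{n/2}^2/v_0^2\in[0,1]$ (the bounds $r_a,r_b\le 1$ follow from $|u_k|\le u_0 = \|a\|_1/n$ for $a\ge 0$, and similarly for $v$). The identity $\sqrt{S_{\mathrm{bd}}T_{\mathrm{bd}}} = u_0 v_0\sqrt{(1+r_a)(1+r_b)}$ combined with the algebraic simplification
\[
  -\tfrac{(1+x)^2}{2} + (\gamma_{2n}(n)-\tfrac{1}{2})(1-x)^2 = -(1+x^2) + \gamma_{2n}(n)(1-x)^2
\]
then collapses the boundary non-negativity, after extracting the common factor $u_0^2/4$, to exactly \eqref{Ineq: Quadratic inequality from n to 2n} in the even case; the odd case coincides with specializing to $r_a = r_b = 0$ (yielding the automatic bound $(\gamma_{2n}(n)-1)(1-x)^2\ge 0$). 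The main obstacle is the interior analysis, where the indefinite-sign cross term $(\alpha_k-\beta_k)(|c_k|^2-|d_k|^2)$ must be absorbed by chaining AM--GM, the phase-cross bound, and Cauchy--Schwarz in the correct order; the degenerate cases $u_0 = 0$ (i.e., $a\equiv 0$) or $v_0 = 0$ are handled by continuity and the degree-$2$ homogeneity of the inequality.
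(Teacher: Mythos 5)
Your proof is correct and follows essentially the same route as the paper: Cooley--Tukey factorization of $F_{2n}$, diagonalization of both quadratic forms in frequency space, pairing $k$ with $n-k$ via conjugate symmetry, absorbing the interior frequencies using $\gamma_{2n}(k)\ge\gamma_n(k)$ and $\gamma_{2n}(n-k)-\gamma_{2n}(k)\ge 1$, and reducing the DC/Nyquist boundary block to the scalar inequality \eqref{Ineq: Quadratic inequality from n to 2n}. The only difference is cosmetic: you handle the cross term by splitting $\sqrt{ST}$ superadditively and using a symmetric/antisymmetric decomposition with AM--GM, where the paper instead bounds $(\norm{a}_2-\norm{b}_2)^2\le\frac1n\norm{u-v}_2^2$ on the modulus vectors and uses $\abs{\hat a_k-(D\hat b)_k}^2\ge(\abs{\hat a_k}-\abs{\hat b_k})^2$; both rest on the identical hypotheses and yield the same boundary reduction.
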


\begin{proof}
  We first prove the theorem for even $n$. The condition \eqref{Cond: gamma pair condition} becomes 
  \begin{equation}\label{Cond: gamma pair condition, n even}
    \begin{cases}
\gamma_{n}(0)=\gamma_{2n}(0)=0,\\
\gamma_{n}(k)=\gamma_{n}(n-k),\qquad 1\le k\le n-1,\\
\gamma_{2n}(k)=\gamma_{2n}(2n-k),\qquad 1\le k\le 2n-1,\\
\gamma_{2n}(k)\ge \gamma_n(k),\qquad 1\le k\le \frac{n}{2}-1,\\
\gamma_{2n}(n-k)-\gamma_{2n}(k)-1\ge 0,\qquad 0\le k\le \frac{n}{2}-1.
\end{cases}
  \end{equation}
Write $\lambda=(a_0,b_0,a_1,b_1,\ldots,a_{n-1},b_{n-1})$. The DFTs $\hat{\lambda}=F_{2n}\lambda$, $\hat{a}=F_{n}a=(\hat{a}_0,\ldots,\hat{a}_{n-1})$ and $\hat{b}=F_{n}b=(\hat{b}_0,\ldots,\hat{b}_{n-1})$ satisfy
\[
    \hat{\lambda}_k=\hat{a}_k+e^{-\frac{2\pi i k}{2n}}\hat{b}_k,\qquad \hat{\lambda}_{n+k}=\hat{a}_k-e^{-\frac{2\pi i k}{2n}}\hat{b}_k,
\]
for $ k=0,\ldots,n-1$. Set
\[
  D=\diag\big(1,e^{-\frac{2\pi i}{2n}},e^{-\frac{4\pi i}{2n}},\ldots,e^{-\frac{2(n-1)\pi i}{2n}}\big),
\]
so that
\[
  \hat{\lambda}=\begin{pmatrix}\hat{a}+D\hat{b}\\ \hat{a}-D\hat{b}\end{pmatrix}.
\]
 Using that $\big(\tfrac{1}{\sqrt{m}}F_m\big)^2$ is a permutation matrix which reverses indices $k\leftrightarrow m-k$ (with $0$ fixed) for $1\le k\le m-1$, we have
\[
  \frac{1}{2n}F_{2n}\Gamma(2n)F_{2n}^{-1}=\frac{1}{(2n)^2}F_{2n}^2\diag(\gamma_{2n})F_{2n}^{-2}
  =\diag\Big(0,\tfrac{\gamma_{2n}(1)}{(2n)^2},\ldots,\tfrac{\gamma_{2n}(2n-1)}{(2n)^2}\Big)
  =\begin{pmatrix} M_1 & 0 \\ 0 & M_2 \end{pmatrix},
\]
where $M_1,M_2$ are $n\times n$ diagonal matrices for the frequency blocks $\{0,1,\ldots,n-1\}$ and $\{n,\ldots,2n-1\}$, respectively. Therefore
\[
\begin{aligned}
  2\inner{\lambda,\Gamma(2n)\lambda} &=2\inner{\tfrac{1}{\sqrt{2n}}F_{2n}\lambda,\tfrac{1}{\sqrt{2n}}F_{2n}\Gamma(2n)\lambda}\\
  &=2\inner{\begin{pmatrix}\hat{a}+D\hat{b}\\ \hat{a}-D\hat{b}\end{pmatrix},
        \begin{pmatrix} M_1 & 0 \\ 0 & M_2 \end{pmatrix}
        \begin{pmatrix}\hat{a}+D\hat{b}\\ \hat{a}-D\hat{b}\end{pmatrix}}\\
  &=\inner{\hat{a},4M_1\hat{a}}+\inner{\hat{b},4M_1\hat{b}}
    +\inner{\hat{a}-D\hat{b},2(M_2-M_1)(\hat{a}-D\hat{b})}.
\end{aligned}
\]
Here
\[
  \begin{aligned}
    4M_1&=\diag\Big(0,\tfrac{\gamma_{2n}(1)}{n^2},\ldots,\tfrac{\gamma_{2n}(n-1)}{n^2}\Big),\\
    2(M_2-M_1)&=\diag\Big(\tfrac{\gamma_{2n}(n)}{2n^2},\tfrac{\gamma_{2n}(n+1)-\gamma_{2n}(1)}{2n^2},\ldots,\tfrac{\gamma_{2n}(2n-1)-\gamma_{2n}(n-1)}{2n^2}\Big).
  \end{aligned}
\]
Recall that we write $\hat{a}=(\hat{a}_0,\ldots,\hat{a}_{n-1})$ and $\hat{b}=(\hat{b}_0,\ldots,\hat{b}_{n-1})$. For real vectors $a,b$ we have the conjugate symmetries
\[
  \hat a_k=\overline{\hat a_{n-k}},
  \qquad
  (D\hat b)_k=e^{-\frac{2\pi i k}{2n}}\hat b_k
  =-\overline{(D\hat b)_{n-k}},
\]
so
\[
  \inner{\hat a,4M_1\hat a}+\inner{\hat b,4M_1\hat b}
  =\sum_{k=1}^{\frac{n}{2}-1}\frac{\gamma_{2n}(k)+\gamma_{2n}(n-k)}{n^2}\big(\abs{\hat a_k}^2+\abs{\hat b_k}^2\big)
   +\frac{\gamma_{2n}(\frac{n}{2})}{n^2}\big(\abs{\hat a_{\frac{n}{2}}}^2+\abs{\hat b_{\frac{n}{2}}}^2\big).
\]
Using $\gamma_{2n}(k)=\gamma_{2n}(2n-k)$ and pairing $k$ with $n-k$,
\[
\begin{aligned}
\inner{\hat a-D\hat b,2(M_2-M_1)(\hat a-D\hat b)}
&=\frac{\gamma_{2n}(n)}{2n^2}\abs{\hat a_0-\hat b_0}^2
  +\sum_{k=1}^{\frac{n}{2}-1}\frac{\gamma_{2n}(n+k)-\gamma_{2n}(k)}{2n^2}\abs{\hat a_k-(D\hat b)_k}^2\\
&\quad+\sum_{k=1}^{\frac{n}{2}-1}\frac{\gamma_{2n}(2n-k)-\gamma_{2n}(n-k)}{2n^2}\abs{\hat a_{n-k}-(D\hat b)_{n-k}}^2\\
&=\frac{\gamma_{2n}(n)}{2n^2}\abs{\hat a_0-\hat b_0}^2
  +\sum_{k=1}^{\frac{n}{2}-1}\frac{\gamma_{2n}(n-k)-\gamma_{2n}(k)}{2n^2}
     \Big(\abs{\hat a_k-(D\hat b)_k}^2-\abs{\hat a_k+(D\hat b)_k}^2\Big)\\
&=\frac{\gamma_{2n}(n)}{2n^2}\abs{\hat a_0-\hat b_0}^2
  -\sum_{k=1}^{\frac{n}{2}-1}\frac{2\big(\gamma_{2n}(n-k)-\gamma_{2n}(k)\big)}{n^2}
    \Re\big(\overline{\hat a_k}(D\hat b)_k\big),
\end{aligned}
\]
where we have used $\abs{\hat a_{n-k}-(D\hat b)_{n-k}}^2=\abs{\hat a_k+(D\hat b)_k}^2$ and the identity $\abs{x-y}^2-\abs{x+y}^2=-4\Re(\overline{x}y)$.\par

Hence, on the frequency side,
\begin{equation}\label{Eqn: 2n quadratic form of lambda}
\begin{aligned}
2\inner{\lambda,\Gamma(2n)\lambda}
&=\sum_{k=1}^{\frac{n}{2}-1}\frac{\gamma_{2n}(k)+\gamma_{2n}(n-k)}{n^2}\big(\abs{\hat a_k}^2+\abs{\hat b_k}^2\big)
  +\frac{\gamma_{2n}(\tfrac{n}{2})}{n^2}\big(\abs{\hat a_{\frac{n}{2}}}^2+\abs{\hat b_{\frac{n}{2}}}^2\big)\\
&\quad+\frac{\gamma_{2n}(n)}{2n^2}\abs{\hat a_0-\hat b_0}^2
  -\sum_{k=1}^{\frac{n}{2}-1}\frac{2\big(\gamma_{2n}(n-k)-\gamma_{2n}(k)\big)}{n^2}
    \Re\big(\overline{\hat a_k}(D\hat b)_k\big).
\end{aligned}
\end{equation}
Similarly, using $\gamma_{n}(k)=\gamma_{n}(n-k)$,
\begin{equation}\label{Eqn: n quadratic form of a and b}
\begin{aligned}
\inner{a,\Gamma(n)a}+\inner{b,\Gamma(n)b}
&=\sum_{k=1}^{n-1}\frac{\gamma_{n}(k)}{n^2}\big(\abs{\hat a_k}^2+\abs{\hat b_k}^2\big)\\
&=\sum_{k=1}^{\frac{n}{2}-1}\frac{\gamma_{n}(k)+\gamma_{n}(n-k)}{n^2}\big(\abs{\hat a_k}^2+\abs{\hat b_k}^2\big)
  +\frac{\gamma_{n}(\tfrac{n}{2})}{n^2}\big(\abs{\hat a_{\frac{n}{2}}}^2+\abs{\hat b_{\frac{n}{2}}}^2\big)\\
&=\sum_{k=1}^{\frac{n}{2}-1}\frac{2\gamma_{n}(k)}{n^2}\big(\abs{\hat a_k}^2+\abs{\hat b_k}^2\big)
  +\frac{\gamma_{n}(\tfrac{n}{2})}{n^2}\big(\abs{\hat a_{\frac{n}{2}}}^2+\abs{\hat b_{\frac{n}{2}}}^2\big).
\end{aligned}
\end{equation}
Therefore, combining \eqref{Eqn: 2n quadratic form of lambda} and \eqref{Eqn: n quadratic form of a and b}, \eqref{Ineq: Compare Dirichlet form for n and 2n} is equivalent to
\[
\begin{aligned}
&\sum_{k=1}^{\frac{n}{2}-1}\frac{2\gamma_{n}(k)}{n^2}\big(\abs{\hat a_k}^2+\abs{\hat b_k}^2\big)
 +\frac{\gamma_{n}(\frac{n}{2})}{n^2}\big(\abs{\hat a_{\frac{n}{2}}}^2+\abs{\hat b_{\frac{n}{2}}}^2\big)
 +\frac{1}{2n}\big(\norm{a}_2-\norm{b}_2\big)^2\\
&\qquad\le\sum_{k=1}^{\frac{n}{2}-1}\frac{\gamma_{2n}(k)+\gamma_{2n}(n-k)}{n^2}\big(\abs{\hat a_k}^2+\abs{\hat b_k}^2\big)-\sum_{k=1}^{\frac{n}{2}-1}\frac{2\big(\gamma_{2n}(n-k)-\gamma_{2n}(k)\big)}{n^2}\Re\big(\overline{\hat a_k}(D\hat b)_k\big)\\
&\qquad\quad +\frac{\gamma_{2n}(\frac{n}{2})}{n^2}\big(\abs{\hat a_{\frac{n}{2}}}^2+\abs{\hat b_{\frac{n}{2}}}^2\big)
 +\frac{\gamma_{2n}(n)}{2n^2}\abs{\hat a_0-\hat b_0}^2.
\end{aligned}
\]
Using $\abs{x-y}^2=\abs{x}^2+\abs{y}^2-2\Re(\overline{x}y)$, we obtain
\[
\begin{aligned}
&\sum_{k=1}^{\frac{n}{2}-1}\frac{\gamma_{2n}(k)+\gamma_{2n}(n-k)-2\gamma_n(k)}{n^2}\big(\abs{\hat a_k}^2+\abs{\hat b_k}^2\big)
-\sum_{k=1}^{\frac{n}{2}-1}\frac{2\big(\gamma_{2n}(n-k)-\gamma_{2n}(k)\big)}{n^2}\Re\big(\overline{\hat a_k}(D\hat b)_k\big)\\
&\qquad=\sum_{k=1}^{\frac{n}{2}-1}\frac{2\big(\gamma_{2n}(k)-\gamma_n(k)\big)}{n^2}\big(\abs{\hat a_k}^2+\abs{\hat b_k}^2\big)
+\sum_{k=1}^{\frac{n}{2}-1}\frac{\gamma_{2n}(n-k)-\gamma_{2n}(k)}{n^2}\abs{\hat a_k-(D\hat b)_k}^2\\
&\qquad\ge \sum_{k=1}^{\frac{n}{2}-1}\frac{2\big(\gamma_{2n}(k)-\gamma_n(k)\big)}{n^2}\big(\abs{\hat a_k}^2+\abs{\hat b_k}^2\big)
+\sum_{k=1}^{\frac{n}{2}-1}\frac{\gamma_{2n}(n-k)-\gamma_{2n}(k)}{n^2}\big(\abs{\hat a_k}-\abs{\hat b_k}\big)^2,
\end{aligned}
\]
where the last step uses the assumption $\gamma_{2n}(n-k)-\gamma_{2n}(k)\geq 1$ in \eqref{Cond: gamma pair condition}. Therefore, to prove \eqref{Ineq: Compare Dirichlet form for n and 2n}, it suffices to show
\begin{equation}\label{Ineq: Fourier side of comparison of Dirichlet form for n and 2n}
\begin{aligned}
\frac{1}{2n}\big(\norm{a}_2-\norm{b}_2\big)^2&\leq\sum_{k=1}^{\frac{n}{2}-1}\frac{2\big(\gamma_{2n}(k)-\gamma_n(k)\big)}{n^2}\big(\abs{\hat a_k}^2+\abs{\hat b_k}^2\big)
+\sum_{k=1}^{\frac{n}{2}-1}\frac{\gamma_{2n}(n-k)-\gamma_{2n}(k)}{n^2}\big(\abs{\hat a_k}-\abs{\hat b_k}\big)^2\\
&\quad
+\frac{\gamma_{2n}(\frac{n}{2})-\gamma_n(\frac{n}{2})}{n^2}\big(\abs{\hat a_{\frac{n}{2}}}^2+\abs{\hat b_{\frac{n}{2}}}^2\big)
+\frac{\gamma_{2n}(n)}{2n^2}\abs{\hat a_0-\hat b_0}^2.
\end{aligned}
\end{equation}\par 

Now estimate $\big(\norm{a}_2-\norm{b}_2\big)^2$. Let
\[
  \begin{aligned}
    u&=\Big(\sqrt{\abs{\hat a_0}^2+\abs{\hat a_{\frac{n}{2}}}^2}, \sqrt{2}\abs{\hat a_1},\sqrt{2}\abs{\hat a_2},\ldots,\sqrt{2}\abs{\hat a_{\frac{n}{2}-1}}\Big),\\
    v&=\Big(\sqrt{\abs{\hat b_0}^2+\abs{\hat b_{\frac{n}{2}}}^2}, \sqrt{2}\abs{\hat b_1},\sqrt{2}\abs{\hat b_2},\ldots,\sqrt{2}\abs{\hat b_{\frac{n}{2}-1}}\Big),
  \end{aligned}
\]
so that $\norm{a}_2=\frac{1}{\sqrt{n}}\norm{u}_2$ and $\norm{b}_2=\frac{1}{\sqrt{n}}\norm{v}_2$. Then
\[
\begin{aligned}
\big(\norm{a}_2-\norm{b}_2\big)^2
&=\frac{1}{n}\big(\norm{u}_2-\norm{v}_2\big)^2\\
&\le \frac{1}{n}\norm{u-v}_2^2\\
&=\frac{1}{n}\left(
\Big(\sqrt{\abs{\hat a_0}^2+\abs{\hat a_{\frac{n}{2}}}^2}-\sqrt{\abs{\hat b_0}^2+\abs{\hat b_{\frac{n}{2}}}^2}\Big)^2
+2\sum_{k=1}^{\frac{n}{2}-1}\big(\abs{\hat a_k}-\abs{\hat b_k}\big)^2\right)\\
&=\frac{1}{n}\left(
\abs{\hat a_0}^2+\abs{\hat a_{\frac{n}{2}}}^2+\abs{\hat b_0}^2+\abs{\hat b_{\frac{n}{2}}}^2
-2\sqrt{\big(\abs{\hat a_0}^2+\abs{\hat a_{\frac{n}{2}}}^2\big)\big(\abs{\hat b_0}^2+\abs{\hat b_{\frac{n}{2}}}^2\big)}
+2\sum_{k=1}^{\frac{n}{2}-1}\big(\abs{\hat a_k}-\abs{\hat b_k}\big)^2\right).
\end{aligned}
\]
Plugging this into \eqref{Ineq: Fourier side of comparison of Dirichlet form for n and 2n}, we see it is enough to prove
\begin{equation}
\begin{aligned}
&\sum_{k=1}^{\frac{n}{2}-1}\frac{2\big(\gamma_{2n}(k)-\gamma_n(k)\big)}{n^2}\big(\abs{\hat a_k}^2+\abs{\hat b_k}^2\big)
+\sum_{k=1}^{\frac{n}{2}-1}\frac{\gamma_{2n}(n-k)-\gamma_{2n}(k)-1}{n^2}\big(\abs{\hat a_k}-\abs{\hat b_k}\big)^2
-\frac{1}{2n^2}\big(\abs{\hat a_0}^2+\abs{\hat b_0}^2\big)\\
&\qquad
+\frac{2\gamma_{2n}(\frac{n}{2})-2\gamma_n(\frac{n}{2})-1}{2n^2}\big(\abs{\hat a_{\frac{n}{2}}}^2+\abs{\hat b_{\frac{n}{2}}}^2\big)
+\frac{\gamma_{2n}(n)}{2n^2}\abs{\hat a_0-\hat b_0}^2
+\frac{1}{n^2}\sqrt{\big(\abs{\hat a_0}^2+\abs{\hat a_{\frac{n}{2}}}^2\big)\big(\abs{\hat b_0}^2+\abs{\hat b_{\frac{n}{2}}}^2\big)}
\ \ge\ 0.
\end{aligned}
\end{equation}
Under the pairwise conditions \eqref{Cond: gamma pair condition, n even}, the first two sums are nonnegative. Thus it suffices to verify
\begin{equation}\label{Ineq: Sufficient inequality of comparison of Dirichlet form for n and 2n}
\begin{aligned}
&-\big(\abs{\hat a_0}^2+\abs{\hat b_0}^2\big)+(2\gamma_{2n}(\tfrac{n}{2})-2\gamma_n(\tfrac{n}{2})-1)\big(\abs{\hat a_{\frac{n}{2}}}^2+\abs{\hat b_{\frac{n}{2}}}^2\big)
+\gamma_{2n}(n)\abs{\hat a_0-\hat b_0}^2\\
&\qquad+2\sqrt{\big(\abs{\hat a_0}^2+\abs{\hat a_{\frac{n}{2}}}^2\big)\big(\abs{\hat b_0}^2+\abs{\hat b_{\frac{n}{2}}}^2\big)}\ \ge\ 0.
\end{aligned}
\end{equation}
Note that by definition, $\hat{a}_{\frac{n}{2}}=\sum_{j=0}^{n-1}(-1)^{j}a_j$, $\hat{a}_{0}=\sum_{j=0}^{n-1}a_j$ and similarly for $\hat{b}_{\frac{n}{2}}$ and $\hat{b}_{0}$. Since $a,b\in\mathbb{R}_+^n$, we have
\[
  \abs{\hat a_{\frac{n}{2}}}\le \hat a_0,\qquad \abs{\hat b_{\frac{n}{2}}}\le \hat b_0.
\]
Write $\abs{\hat a_{\frac{n}{2}}}=\sqrt{r_a}\hat a_0$ and $\abs{\hat b_{\frac{n}{2}}}=\sqrt{r_b}\hat b_0$ with $0\le r_a,r_b\le 1$. By symmetry in $\hat a_0$ and $\hat b_0$ in \eqref{Ineq: Sufficient inequality of comparison of Dirichlet form for n and 2n}, assume $\hat a_0>0$ and set $x=\hat b_0/\hat a_0\ge 0$. Then \eqref{Ineq: Sufficient inequality of comparison of Dirichlet form for n and 2n} becomes
\[
  \bigl(2\gamma_{2n}(\tfrac{n}{2})-2\gamma_n(\tfrac{n}{2})-1\bigr)\bigl(r_a+r_b x^2\bigr)
  +\gamma_{2n}(n)(1-x)^2-(1+x^2)+2x\sqrt{(1+r_a)(1+r_b)} \ge 0,
\]
which is exactly \eqref{Ineq: Quadratic inequality from n to 2n} if $n$ is even.\par 

The case where $n$ is odd is handled by exactly the same computation. In this case the condition \eqref{Cond: gamma pair condition} becomes 
\[
  \begin{cases}
    \gamma_n(0)=0,\\
    \gamma_n(k)=\gamma_n(n-k),\qquad 1\le k\le n-1,\\
    \gamma_{2n}(k)=\gamma_{2n}(2n-k),\qquad 1\le k\le 2n-1,\\
    \gamma_{2n}(k)\ge \gamma_n(k),\qquad 1\le k\le \tfrac{n-1}{2},\\
    \gamma_{2n}(n-k)-\gamma_{2n}(k)-1\ge 0,\qquad 0\le k\le \tfrac{n-1}{2}.
  \end{cases}
\]
Since for odd $n$ the middle frequencies $\hat a_{\frac{n}{2}}$ and $\hat b_{\frac{n}{2}}$ are absent in \eqref{Ineq: Sufficient inequality of comparison of Dirichlet form for n and 2n}, setting $x=\hat b_0/\hat a_0\ge 0$ and $\hat{a}_{\frac{n}{2}}=\hat{b}_{\frac{n}{2}}=0$, the condition \eqref{Ineq: Sufficient inequality of comparison of Dirichlet form for n and 2n} reduces to the scalar condition
\[
  \gamma_{2n}(n)(1-x)^2-(1+x^2)+2x\ge 0,
\]
which is \eqref{Ineq: Quadratic inequality from n to 2n} if $n$ is odd. This completes the proof of the theorem.
\end{proof}

Under the conditions \eqref{Cond: gamma pair condition} and \eqref{Ineq: Quadratic inequality from n to 2n}, we now pass from the $n$--LSI to the $2n$--LSI by decomposing the entropy functional $\mathrm{H}_{2n}$ for 
$\lambda\in\mathbb{R}^{2n}_+$, applying the $n$--LSI and the $2$--LSI to the resulting components, and then using Proposition~\ref{Prop: Compare Dirichlet form for n and 2n} to compare the resulting Dirichlet forms 
with the $2n$-level form.

\begin{theorem}\label{Thm: induction from n-LSI to 2n-LSI}
Let $n\geq 3$ be an integer. Let $\Gamma(n)$ and $\Gamma(2n)$ be the matrices corresponding to a length-function pair $(\gamma_n,\gamma_{2n})$ satisfying \eqref{Cond: gamma pair condition} and \eqref{Ineq: Quadratic inequality from n to 2n}. If the $n$--LSI holds with Dirichlet form $\inner{\lambda,\Gamma(n)\lambda}$ and constant $2$:
\[
\mathrm{H}_n[\lambda]\le 2\inner{\lambda,\Gamma(n)\lambda}\qquad \forall\lambda\in \mathbb{R}_+^n,
\]
then the $2n$--LSI holds with Dirichlet form $\inner{\lambda,\Gamma(2n)\lambda}$ and the same constant $2$:
\[
\mathrm{H}_{2n}[\lambda]\le 2\inner{\lambda,\Gamma(2n)\lambda}\qquad \forall\lambda\in \mathbb{R}_+^{2n}.
\]\par 

In addition, for odd $n_0\ge 3$, if the $n_0$--LSI holds for the word-length function $\psi_{n_0}$ with constant $2$, then for every $m\ge 1$ the $n_0\cdot 2^m$--LSI holds for $\psi_{n_0\cdot 2^m}$ with constant $2$.
\end{theorem}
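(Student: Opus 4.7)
The plan is to prove the main implication via an entropy tensorization identity combined with Proposition~\ref{Prop: Compare Dirichlet form for n and 2n}, and then derive the ``in addition'' conclusion by iterating this implication with the pair $(\psi_n,\psi_{2n})$ along the dyadic tower $\{n_0\cdot 2^k\}_{k\ge 0}$.

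For the main implication, I would write $\lambda=(a_0,b_0,\ldots,a_{n-1},b_{n-1})\in\mathbb{R}_+^{2n}$ in interleaved form with $a,b\in\mathbb{R}_+^n$, and first establish the algebraic identity
\[
    \mathrm{H}_{2n}[\lambda]=\tfrac{1}{2}\mathrm{H}_n[a]+\tfrac{1}{2}\mathrm{H}_n[b]+\tfrac{1}{n}\mathrm{H}_2\bigl[(\norm{a}_2,\norm{b}_2)\bigr].
\]
This identity follows by adding and subtracting $\tfrac{\norm{a}_2^2}{2n}\log\tfrac{\norm{a}_2^2}{n}+\tfrac{\norm{b}_2^2}{2n}\log\tfrac{\norm{b}_2^2}{n}$ in the definition of $\mathrm{H}_{2n}[\lambda]$ and identifying the residual with $\tfrac{1}{n}$ times the two-point entropy of $(\norm{a}_2,\norm{b}_2)$ via the degree-$2$ homogeneity of $\mathrm{H}_2$. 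Then the hypothesized $n$--LSI applied to $a$ and $b$ bounds the first two summands by $\inner{a,\Gamma(n)a}+\inner{b,\Gamma(n)b}$, and the classical Bonami--Gross two-point LSI $\mathrm{H}_2[(u,v)]\le\tfrac{1}{2}(u-v)^2$ bounds the last summand by $\tfrac{1}{2n}(\norm{a}_2-\norm{b}_2)^2$. Summing these three bounds produces exactly the left-hand side of \eqref{Ineq: Compare Dirichlet form for n and 2n}; Proposition~\ref{Prop: Compare Dirichlet form for n and 2n} then upgrades it to $2\inner{\lambda,\Gamma(2n)\lambda}$, giving the $2n$--LSI.

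For the ``in addition'' conclusion, I would iterate the main implication $m$ times with $(\gamma_n,\gamma_{2n})=(\psi_n,\psi_{2n})$ for $n=n_0,2n_0,\ldots,2^{m-1}n_0$. The compatibility conditions \eqref{Cond: gamma pair condition} are immediate from $\psi_m(k)=\min(k,m-k)$: palindromic symmetry is automatic; $\psi_{2n}(k)=k=\psi_n(k)$ for $1\le k\le\lfloor(n-1)/2\rfloor$; and $\psi_{2n}(n-k)-\psi_{2n}(k)-1=n-2k-1\ge 0$ in the same range. At the very first step $n=n_0$ is odd, so the odd form of \eqref{Ineq: Quadratic inequality from n to 2n} becomes $\psi_{2n_0}(n_0)(1-x)^2-(1-x)^2=(n_0-1)(1-x)^2\ge 0$, immediate since $n_0\ge 3$.

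The main obstacle is verifying the even case of \eqref{Ineq: Quadratic inequality from n to 2n} at each subsequent step, where $\psi_{2n}(n/2)=\psi_n(n/2)=n/2$ forces the middle coefficient $2\gamma_{2n}(n/2)-2\gamma_n(n/2)-1$ to equal $-1$, and the required inequality becomes
\[
   -(r_a+r_bx^2)+n(1-x)^2-(1+x^2)+2x\sqrt{(1+r_a)(1+r_b)}\ge 0,\qquad x\ge 0,\ r_a,r_b\in[0,1].
\]
The sensitive regime is $x$ near $1$ with $r_a\ne r_b$, where the last two terms nearly cancel into $-(\sqrt{1+r_a}-\sqrt{1+r_b})^2$ and the growth factor $n$ in front of $(1-x)^2$ must compensate. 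I expect this to require a careful quantitative analysis of the quadratic as a function of $x$ for fixed $r_a,r_b$; if the direct route proves too tight, the fallback is to introduce modified intermediate weights analogous to $\phi_4,\phi_6$, propagate a tighter $\tilde\phi_n$--LSI along the tower, and recover the claimed $\psi_{n_0\cdot 2^m}$--LSI from the pointwise bound $\tilde\phi_{n_0\cdot 2^m}\le\psi_{n_0\cdot 2^m}$.
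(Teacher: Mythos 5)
Your proof of the main implication is correct and is essentially the paper's argument: the same interleaved splitting $\lambda=(a_0,b_0,\ldots,a_{n-1},b_{n-1})$, the same three-term entropy decomposition (the paper writes it out directly rather than as $\tfrac12\mathrm{H}_n[a]+\tfrac12\mathrm{H}_n[b]+\tfrac1n\mathrm{H}_2[(\norm{a}_2,\norm{b}_2)]$, but it is the same identity), the $n$--LSI applied to $a$ and $b$, the two-point LSI applied to $(\norm{a}_2,\norm{b}_2)$, and then Proposition~\ref{Prop: Compare Dirichlet form for n and 2n}.

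The ``in addition'' part has a genuine gap. Your primary route --- iterating with the unmodified pair $(\psi_n,\psi_{2n})$ --- does not merely require ``careful quantitative analysis''; it provably fails. For even $n$ the middle coefficient is $2\psi_{2n}(n/2)-2\psi_n(n/2)-1=-1$ and $\psi_{2n}(n)=n$, and at $x=1$ the term $n(1-x)^2$ vanishes identically, leaving
\[
-(r_a+r_b)-2+2\sqrt{(1+r_a)(1+r_b)}=-\bigl(\sqrt{1+r_a}-\sqrt{1+r_b}\bigr)^2<0
\]
whenever $r_a\neq r_b$ (e.g.\ $-3+2\sqrt2$ at $r_a=1,r_b=0$). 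So \eqref{Ineq: Quadratic inequality from n to 2n} is false for this pair, no matter how large $n$ is; you identified the dangerous regime but stopped short of the conclusion. The fallback you gesture at is indeed what the paper does, but it is the whole content of this half of the theorem and is left unexecuted: one must specify the modified weights and re-verify both \eqref{Cond: gamma pair condition} and \eqref{Ineq: Quadratic inequality from n to 2n} for them. The paper's choice is $\gamma_n=\psi_n$ except $\gamma_n(n/2)=1$ for every even $n$ in the tower. Then for the pair $(\gamma_n,\gamma_{2n})$ one gets $2\gamma_{2n}(n/2)-2\gamma_n(n/2)-1=n-3\ge0$ and $\gamma_{2n}(n)=1$, so the left side of \eqref{Ineq: Quadratic inequality from n to 2n} collapses to
\[
r_b(n-3)x^2+2x\bigl(\sqrt{(1+r_a)(1+r_b)}-1\bigr)+r_a(n-3)\ \ge\ 0,
\]
which is trivially nonnegative since every coefficient is; the first (odd) step $(\psi_{n_0},\gamma_{2n_0})$ is handled by your odd-case computation, and the pointwise bound $\gamma_{n_0\cdot2^m}\le\psi_{n_0\cdot2^m}$ recovers the stated conclusion. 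Without exhibiting such a weight and checking this, the second claim is not proved.
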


The last statement in the above theorem does not directly follow for even $n_0$. Indeed, the pair $(\psi_{n_0},\psi_{2n_0})$ does not satisfy \eqref{Ineq: Quadratic inequality from n to 2n} in Proposition~\ref{Prop: Compare Dirichlet form for n and 2n}. To overcome this, we will later introduce pairs of modified weight functions $(\gamma_n,\gamma_{2n})$ in \eqref{Eqn: definition of induced gamma_n for odd base case} and \eqref{Eqn: defination of induced gamma_n} that do satisfy \eqref{Ineq: Quadratic inequality from n to 2n}, thereby enabling the induction that derives Theorem~\ref{Thm: Log Sobolev inequality n=6 times 2^k and n=8 times 2^k}.

\begin{proof}
Recall the case $n=2$ for Theorem~\ref{Thm: Log Sobolev inequality n=6 times 2^k and n=8 times 2^k} is known in \cite{MR420249}, i.e.,
\begin{equation}\label{Ineq: 2-LSI}
  \frac{1}{4}\left( x^2\log\frac{2x^2}{x^2+y^2}+y^2\log\frac{2y^2}{x^2+y^2}\right)
\le \left( \frac{x-y}{2} \right)^2\qquad x,y\geq 0.
\end{equation}
Given $\lambda=(a_0,b_0,a_1,b_1,\ldots,a_{n-1},b_{n-1})$ with $a=(a_0,\ldots,a_{n-1})$ and $b=(b_0,\ldots,b_{n-1})$, the $2n$--entropy splits as
\[
\begin{aligned}
\mathrm{H}_{2n}[\lambda]&=\frac{1}{2n}\sum_{i=0}^{n-1} a_i^2 \log(\frac{2na_i^2}{\sum_{i=0}^{n-1}(a_i^2+b_i^2)})+\frac{1}{2n}\sum_{i=0}^{n-1} b_i^2 \log(\frac{2nb_i^2}{\sum_{i=0}^{n-1}(a_i^2+b_i^2)})\\
&=\frac{1}{2n}\sum_{i=0}^{n-1} a_i^2\log(\frac{n a_i^2}{\sum_{i=0}^{n-1}a_i^2})+\frac{1}{2n}\sum_{i=0}^{n-1} b_i^2\log(\frac{n b_i^2}{\sum_{i=0}^{n-1}b_i^2})\\
 &\quad +\frac{1}{2n}\left(\sum_{i=0}^{n-1} a_i^2\log(\frac{2 \sum_{i=0}^{n-1} a_i^2}{\sum_{i=0}^{n-1}(a_i^2+b_i^2)})+\sum_{i=0}^{n-1} b_i^2\log(\frac{2 \sum_{i=0}^{n-1} b_i^2}{\sum_{i=0}^{n-1}(a_i^2+b_i^2)})\right).
\end{aligned}
\]
Applying the $n$--LSI to the first two terms, and $2$--LSI \eqref{Ineq: 2-LSI} with $x=\norm{a}_2$ and $y=\norm{b}_2$ to the last term, yields
\[
\mathrm{H}_{2n}[\lambda]\le \inner{a,\Gamma(n)a}+\inner{b,\Gamma(n)b}
+\frac{1}{2n}\big(\norm{a}_2-\norm{b}_2\big)^2.
\]
Finally, by Proposition~\ref{Prop: Compare Dirichlet form for n and 2n},
\begin{equation}\label{Eqn: induction from n-LSI to 2n-LSI}
  \mathrm{H}_{2n}[\lambda]\le \inner{a,\Gamma(n)a}+\inner{b,\Gamma(n)b}
+\frac{1}{2n}\big(\norm{a}_2-\norm{b}_2\big)^2
\le 2\inner{\lambda,\Gamma(2n)\lambda}.
\end{equation}\par 

For odd $n_0$, define the weight function on $\mathbb{Z}_n$ for even $n\geq n_0$
\begin{equation}\label{Eqn: definition of induced gamma_n for odd base case}
  \gamma_n(k)=
  \begin{cases}
    \psi_n(k), & k\ne \tfrac{n}{2},\\
    1,           & k=\tfrac{n}{2}.
  \end{cases}
\end{equation}
The pair $(\psi_{n_0},\gamma_{2n_0})$ satisfies \eqref{Cond: gamma pair condition} and \eqref{Ineq: Quadratic inequality from n to 2n}. For even $n$, the pair $(\gamma_n,\gamma_{2n})$ also satisfies \eqref{Cond: gamma pair condition}, and the desired inequality \eqref{Ineq: Quadratic inequality from n to 2n} becomes
\begin{equation}\label{Ineq: quadratic for odd n}
  f(x)\coloneqq r_b (n-3) x^2+x \left(2 \sqrt{r_a+1} \sqrt{r_b+1}-2\right)+r_a (n-3)\geq 0
\end{equation}
for all $x\ge 0$ and $0\le r_a,r_b\le 1$. Since $n-3>0$, the minimum of $f$ is attained at $x=-\frac{2 \sqrt{r_a+1} \sqrt{r_b+1}-2}{2r_b(n-3)}\leq 0$ and the value of the minimum of $f$ on $[0,\infty)$ is $f(0)=r_a(n-3)\geq 0$. Hence \eqref{Ineq: quadratic for odd n} holds. Therefore Proposition~\ref{Prop: Compare Dirichlet form for n and 2n} applies to the pairs $(\gamma_n,\gamma_{2n})$ defined by \eqref{Eqn: definition of induced gamma_n for odd base case}. Iterating \eqref{Eqn: induction from n-LSI to 2n-LSI} along $\big((\psi_{n_0},\gamma_{2n_0}),(\gamma_{2n_0},\gamma_{4n_0}),\ldots\big)$, we obtain for all $m\ge 0$,
\[
  \mathrm{H}_{n_0\cdot 2^{m+1}}[\lambda]
  \le 2\inner{\lambda,\Gamma(n_0\cdot 2^{m+1})\lambda}.
\]
Let $\Psi(n)=\frac{1}{n}F_n\diag(\psi_n)F_n^{-1}$. Since $\gamma_n\leq \psi_n$ pointwise, we have
\[
  \inner{\lambda,\Gamma(n_0\cdot 2^{m+1})\lambda} 
  \le \inner{\lambda,\Psi(n_0\cdot 2^{m+1})\lambda}.
\]
Therefore,
\[
  \mathrm{H}_{n_0\cdot 2^{m+1}}[\lambda]
  \le 2\inner{\lambda,\Psi(n_0\cdot 2^{m+1})\lambda}.
\]
By Lemma~\ref{Lem: Explicit form of Log Sobolev ineq}, the \(n_0\cdot 2^{m+1}\)--LSI holds for 
\(\psi_{n_0\cdot 2^{m+1}}\) for all \(m\ge 0\), which proves the claim.
\end{proof}

We now deduce Theorem~\ref{Thm: Log Sobolev inequality n=6 times 2^k and n=8 times 2^k} from Theorem~\ref{Thm: induction from n-LSI to 2n-LSI} by applying the latter with the base weights $\phi_4$ and $\phi_6$ and an appropriate family of auxiliary weight pairs $(\gamma_n,\gamma_{2n})$.

\begin{proof}[Proof of Theorem~\ref{Thm: Log Sobolev inequality n=6 times 2^k and n=8 times 2^k}]
  \textbf{(1) Case $n=3\cdot 2^m$ with $m\geq 1$.} Let $n$ be an even integer with $n\ge 6$. Define the weight function on $\mathbb{Z}_n$ by
\begin{equation}\label{Eqn: defination of induced gamma_n}
  \gamma_n(k)=
  \begin{cases}
    \psi_n(k), & k\ne \tfrac{n}{2},\\
    \tfrac{n}{2}-1, & k=\tfrac{n}{2}.
  \end{cases}
\end{equation}
We would like to apply Theorem~\ref{Thm: induction from n-LSI to 2n-LSI}. To this end, we note that the pair $(\gamma_n,\gamma_{2n})$ satisfies \eqref{Cond: gamma pair condition}, and the desired inequality \eqref{Ineq: Quadratic inequality from n to 2n} becomes
\[
  f(x)\coloneqq (r_b+n-2)x^2+x\Big(2\sqrt{r_a+1}\sqrt{r_b+1}-2n+2\Big)
  +r_a+n-2 \ge 0.
\]
Since $r_b+n-2>0$, the minimum of $f$ is attained at $x=-\frac{2\sqrt{r_a+1}\sqrt{r_b+1}-2n+2}{2(r_b+n-2)}$ and the value of the minimum is
\[
  \frac{2(n-1)\big(\sqrt{r_a+1}\sqrt{r_b+1}-1\big)
        +(n-3)\big(r_a+r_b\big)}{r_b+n-2},
\]
which is nonnegative for $0\le r_a,r_b\le 1$. Hence the pair $(\gamma_n,\gamma_{2n})$ defined in \eqref{Eqn: defination of induced gamma_n} satisfies \eqref{Ineq: Quadratic inequality from n to 2n}.\par

By Theorem~\ref{Thm: Log Sobolev inequality n=6}, the $6$--LSI holds with weight $\phi_6$; since $\phi_6\le \gamma_6\le \psi_6$, the $6$--LSI holds for $\gamma_6$ as well. Repeated application of Theorem~\ref{Thm: induction from n-LSI to 2n-LSI} yields the $6\cdot 2^m$--LSI holds for $\gamma_{6\cdot 2^m}$ for all $m\ge 1$. Finally, because $\psi_n\ge \gamma_n$ pointwise,
\[
  \mathrm{H}_n[\lambda]  \le 2\inner{\lambda,\Gamma(n)\lambda}
  \le 2\inner{\lambda,\Psi(n)\lambda},
\]
and by Lemma~\ref{Lem: Explicit form of Log Sobolev ineq} the $6\cdot 2^m$--LSI holds for $\psi_{6\cdot 2^m}$.\par

\textbf{(2) Case $n=2^m$ with $m\geq 1$.} The case $n=2$ is classical  and the case $n=4$ is due to the work \cite{MR730056} and Gross's extrapolation technique~\cite{MR420249}. Note that the pair $(\phi_4,\gamma_8)$ satisfies \eqref{Cond: gamma pair condition}, and \eqref{Ineq: Quadratic inequality from n to 2n} becomes
\[
  f(x)\coloneqq \left(2-\frac{r_b}{5}\right)x^2+2x\left(\sqrt{r_a+1}\sqrt{r_b+1}-3\right)
  -\frac{r_a}{5}+2\ \ge 0,
\]
with $0\le r_a,r_b\le 1$ and $x\ge0$. As $2-\frac{r_b}{5}>0$, the minimum of $f$ on $[0,\infty)$ is attained at $x=-\tfrac{2 \left(\sqrt{r_a^2+1} \sqrt{r_b^2+1}-3\right)}{2\left( 2-r_b^2/5 \right)}$ and the value of the minimum is
\[
  \frac{r_a(24r_b+35)+5\left(-30\sqrt{r_a+1}\sqrt{r_b+1}+7r_b+30\right)}
       {5(r_b-10)}
  =\frac{h(r_a,r_b)}{5(r_b-10)}.
\]
Since $r_b\in[0,1]$, the denominator is negative. In order to show \eqref{Ineq: Quadratic inequality from n to 2n} hold, it suffices to show $h(r_a,r_b)\le 0$ on $[0,1]^2$. A direct computation gives
\[
  \pdv[2]{h}{r_a}(r_a,r_b)=\frac{75\sqrt{r_b+1}}{2(r_a+1)^{3/2}}>0,
  \qquad
  \pdv[2]{h}{r_b}(r_a,r_b)=\frac{75\sqrt{r_a+1}}{2(r_b+1)^{3/2}}>0.
\]
Thus $h$ is convex in each variable separately. So the maximum of $h$ on $[0,1]^2$ is attained at a corner. Evaluating,
\[
  h(0,0)=0,\quad h(0,1)=h(1,0)=5\big(37-30\sqrt{2}\big)<0,\quad  h(1,1)=-56<0.
\]
Hence $\max_{[0,1]^2} h\le 0$, and since $5(r_b-10)<0$ we conclude
\[
  \frac{r_a(24r_b+35)+5\left(-30\sqrt{r_a+1}\sqrt{r_b+1}+7r_b+30\right)}
       {5(r_b-10)} \ge 0.
\]
Therefore $(\phi_4,\gamma_8)$ satisfies \eqref{Ineq: Quadratic inequality from n to 2n}. Recall that $(\gamma_{8\cdot 2^m},\gamma_{8\cdot 2^{m+1}})$ also satisfies \eqref{Ineq: Quadratic inequality from n to 2n}. Hence by Theorem~\ref{Thm: Log Sobolev inequality n=4} and Theorem~\ref{Thm: induction from n-LSI to 2n-LSI} the $8\cdot 2^m$--LSI holds for $\gamma_{8\cdot 2^m}$ for all $m\ge 0$, and therefore for $\psi_{8\cdot 2^m}$.
\end{proof}

\begin{remark}
  The role of $\phi_4$ and $\phi_6$ from Section \ref{Sec: Log Sobolev inequality n=4 and n=6} is essential for the above proof. Indeed, the original pairs $(\psi_4,\psi_8)$ and $(\psi_6,\psi_{12})$ do not satisfy \eqref{Ineq: Quadratic inequality from n to 2n}, so Theorem~\ref{Thm: induction from n-LSI to 2n-LSI} would not apply to these pairs if we replace $\phi_n$ by $\psi_n$ in the above arguments.
\end{remark}

\section*{Acknowledgments}
The author would like to thank Professor Quanhua Xu and Professor Simeng Wang for their patience and encouragement, as well as for their careful reading of the manuscript and many helpful discussions. The author is partially supported by the NSF of China (No. 12031004, No. W2441002, No. 12301161, No.12371138).

\printbibliography
\addresseshere

\end{document}